\newcounter{lemmacounter}
\newcounter{thmcounter}
\newtheorem{lemma}[lemmacounter]{Lemma}
\newtheorem{corollary}[thmcounter]{Corollary}
\newtheorem{theorem}[thmcounter]{Theorem}
\def\ord{\mathop{\rm ord}\nolimits}
\newcommand{\places}[1]{M_{#1}}
\newcommand{\heightS}{h}
\newcommand{\height}[1]{\heightS(#1)}
\newcommand{\dmax}[2]{\max \{#1,#2\}}
\newcommand{\hproj}[1]{h_p{(#1)}}
\newcommand{\IR}{{\mathbb R}}
\newcommand{\IC}{{\mathbb C}}
\newcommand{\IZ}{{\mathbb Z}}
\newcommand{\IQ}{\mathbb{Q}}
\newcommand{\IQbar}{\overline{\IQ}}
\newcommand{\loccit}{{\it loc.cit.}}
\newcommand{\ssm}{\smallsetminus}
\renewcommand{\subset}{\subseteq}
\renewcommand{\supset}{\supseteq}
\begin{document}

%%% SVMULT BEGINS
%% \title*{Quasi-equivalence of Heights and Runge's Theorem}
%% \author{P. Habegger}
%% \institute{Department of Mathematics and Computer Science, University of Basel, Spiegelgasse 1, 4051 Basel,
%% Switzerland, \email{philipp.habegger@unibas.ch}}
%% \maketitle
%% \abstract{Let $P$ be a polynomial that depends on
%%  two variables $X$ and $Y$ and has algebraic
%%   coefficients. If $x$ and $y$ are algebraic numbers with $P(x,y)=0$, 
%% then by work of N\'eron $h(x)/q$ is asymptotically equal to $h(y)/p$ where 
%% $p$ and $q$ are the partial degrees of $P$ in $X$ and $Y$,
%% respectively. In this paper we compute a completely explicit bound 
%% for $|h(x)/q-h(y)/p|$ in terms of $P$ which grows asymptotically as
%% $\max\{h(x),h(y)\}^{1/2}$.  We apply this bound to obtain a simple
%% version of Runge's Theorem on the integral solutions of certain polynomial
%% equations. 
%% }
%% {\flushleft
%% \textbf{Keywords: Heights, Absolute Siegel Lemma, Runge's Theorem}  \\
%% \textbf{Mathematics Subject Classification.} Primary:  11G50,
%% secondary: 11D41, 11G30, 14H25, 14H50}
%%% SVMULT ENDS

%%% AMSART BEGINS
\title{Quasi-equivalence of Heights and Runge's Theorem}
\author{P. Habegger}
\address{Department of Mathematics and Computer Science, University of Basel, Spiegelgasse 1, 4051 Basel,
Switzerland}
\email{philipp.habegger@unibas.ch}
\begin{abstract}
  Let $P$ be a polynomial that depends on
 two variables $X$ and $Y$ and has algebraic
  coefficients. If $x$ and $y$ are algebraic numbers with $P(x,y)=0$, 
then by work of N\'eron $h(x)/q$ is asymptotically equal to $h(y)/p$ where 
$p$ and $q$ are the partial degrees of $P$ in $X$ and $Y$,
respectively. In this paper we compute a completely explicit bound 
for $|h(x)/q-h(y)/p|$ in terms of $P$ which grows asymptotically as
$\max\{h(x),h(y)\}^{1/2}$.  We apply this bound to obtain a simple
version of Runge's Theorem on the integral solutions of certain polynomial
equations. 
\end{abstract}
\keywords{Heights, Absolute Siegel Lemma, Runge's Theorem}
\subjclass[2010]{Primary:  11G50,
secondary: 11D41, 11G30, 14H25, 14H50}
\maketitle
%%% AMSART ENDS

%%11G50  	Heights [See also 14G40, 37P30]
%% 11D41  	Higher degree equations; Fermat's equation
%%	11G30  	Curves of arbitrary genus or genus $\ne 1$ over global field
%%	14H25  	Arithmetic ground fields
%%%	14H50  	Plane and space curves

\section{Introduction}

Suppose $P$ is an irreducible polynomial in two variables $X$ and $Y$  and whose coefficients
are in $\IQbar$, an  algebraic closure of $\IQ$. If $x$ and $y$ are algebraic numbers with
$P(x,y)=0$ we investigate the relation between the absolute logarithmic Weil heights 
 $h(x)$ and $h(y)$; this height
is defined in Section \ref{sec:AuxFunction}.

Say  $p=\deg_X P \ge 1$ and $q=\deg_Y P \ge 1$. By
 work of N\'eron \cite{Neron} 
there exists a
constant $c(P)$ such that if $x$ and 
$y$ are algebraic numbers with $P(x,y)=0$,
then 
\begin{equation}
\label{SiegelIneq3}
\left|\frac{\height{x}}{q}-\frac{\height{y}}{p}\right| \le c(P)\max\left\{\frac{\height{x}}{q},\frac{\height{y}}{p}\right\}^{1/2}.
\end{equation}
See
Corollary 9.3.10 in Bombieri and Gubler's book \cite{BG} or
 Theorem B.5.9 in Hindry and Silverman's book \cite{DG2000} for a 
highbrow approach to this bound. 
One sometimes says that $h(x)/q$ and $h(y)/p$ are quasi-equivalent.

Our aim is to determine
an admissible constant $c(P)$ which is completely explicit in terms of $P$.
We will strive for a good dependency in the projective height
$\hproj{P}$
 of $P$, which we also define in Section \ref{sec:AuxFunction},
and  the partial degrees
 $p$ and $q$. 

\begin{theorem} 
\label{QuasiequivThm}
Let $P\in\IQbar[X,Y]$ be irreducible with $p=\deg_X P\ge 1$ and
  $q=\deg_Y P\ge 1$. 
If $P(x,y)=0$ with $x,y\in\IQbar$, then
(\ref{SiegelIneq3}) holds with  
\begin{equation*}
  c(P)  = 5\left(\log\left(2^{\min\{p,q\}}(p+1)(q+1)\right) +\hproj{P}\right)^{1/2}.
\end{equation*}
\end{theorem}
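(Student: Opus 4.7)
The plan is to follow the classical Siegel-lemma-plus-resultant blueprint for quasi-equivalence bounds of this type, while making every constant explicit. By the symmetry $P(X,Y)\mapsto P(Y,X)$ interchanging $(p,q,x,y)$ with $(q,p,y,x)$, one may assume $p\le q$, so that $\min\{p,q\}=p$, and also that $\height{x}/q\ge\height{y}/p$. Set $H=\height{x}/q$ and $G=\height{y}/p$; the goal becomes $H-G\le c(P)\sqrt{H}$. For positive integers $a\ge p$ and $b\ge q$ to be chosen below, let $V_{a,b}\subset\IQbar[X,Y]$ denote the $(a+1)(b+1)$-dimensional space of polynomials of bidegree $\le(a,b)$. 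Since $P$ is irreducible, the subspace $P\cdot V_{a-p,b-q}\subset V_{a,b}$ has codimension
\[
  N=(a+1)(b+1)-(a-p+1)(b-q+1)=aq+bp-pq+p+q.
\]
In the quotient, the linear form $Q\mapsto Q(x,y)$ is nontrivial, so its kernel has dimension at least $N-1$. The defining coefficient vector $(x^iy^j)_{0\le i\le a,\,0\le j\le b}$ has projective height at most $a\height{x}+b\height{y}$, and an absolute Siegel lemma of Roy--Thunder / Zhang type then produces a nonzero representative $Q\in V_{a,b}$ with $P\nmid Q$, $Q(x,y)=0$, and
\[
  \hproj{Q}\le\frac{a\height{x}+b\height{y}}{N-1}+\tfrac12\log(N-1).
\]

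Since $P$ is irreducible and $P\nmid Q$, the resultant $R(X)=\mathrm{Res}_Y(P,Q)\in\IQbar[X]$ is a nonzero polynomial of degree at most $pb+qa$ that vanishes at $x$, because $P(x,y)=0=Q(x,y)$. The Sylvester-matrix bound gives
\[
  \hproj{R}\le b\hproj{P}+q\hproj{Q}+\log\tbinom{b+q}{q},
\]
and Mahler's inequality for a single root yields $\height{x}\le\hproj{R}+\log 2$. Take the parameters proportional: $a=\lfloor\lambda p\rfloor$ and $b=\lceil\lambda q\rceil$ for a real $\lambda>1$. Then $N\ge pq(2\lambda-1)$ and $a\height{x}+b\height{y}\le\lambda pq(H+G)$, so the Siegel bound collapses to $\hproj{Q}\lesssim\lambda(H+G)/(2\lambda-1)$. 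Inserting this into the resultant estimate, dividing by $q$, and rearranging converts the inequality into the shape
\[
  H-G\le A\lambda+B\frac{H}{\lambda}+C,
\]
with $A$ essentially $\hproj{P}+\min\{p,q\}\log 2+\log((p+1)(q+1))$, $B$ an absolute constant, and $C$ collecting the small logarithmic errors. The AM--GM optimum $\lambda\asymp\sqrt{H/A}$ then yields $H-G\le 2\sqrt{AB}\,\sqrt H+C$, which, after absorbing $C$ into the square root, gives the stated form.

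The main obstacle is constant-chasing sharp enough to land at the precise coefficient $5$ and the tidy $\log(2^{\min\{p,q\}}(p+1)(q+1))+\hproj{P}$ under the square root. Every step leaks a log --- the absolute Siegel lemma adds $\tfrac12\log(N-1)$, the Sylvester determinant contributes $\log\tbinom{b+q}{q}$, the integrality of $\lfloor\lambda p\rfloor,\lceil\lambda q\rceil$ costs an extra $H/\lambda$-type term, and the Mahler step contributes $\log 2$ --- and these must be consolidated into a single log inside the square root. The factor $2^{\min\{p,q\}}$ most naturally appears from a Mahler-measure comparison applied to the resultant $R$ or to $P$ itself, trading sharper control in one place for a $\min\{p,q\}\log 2$ loss in another. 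Finally, the boundary regimes (small $H$, or $\lambda$ forced close to $1$ by integrality) need to be handled separately so that the clean uniform constant $5$ survives.
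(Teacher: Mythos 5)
Your route is genuinely different from the paper's and, if completed, arguably cleaner. The paper constructs $A,B$ with $P\mid AY^m-B$ (a \emph{functional} identity along the curve, Lemma~\ref{ConstructLemma1}), then must handle the possibility $A(x,y)=0$; it does so through a function-field multiplicity estimate (Lemma~\ref{ZeroBoundLemma}) and the differential operator $D$ (Lemma~\ref{HopitalLemma}), and it also needs the rational transformation of Lemma~\ref{RationalTransformation} to force $\deg P=p+q$ plus a separate treatment of singular zeros (Lemma~\ref{SingLemma}). You instead impose the \emph{point} condition $Q(x,y)=0$ and take $R=\mathrm{Res}_Y(P,Q)$, which vanishes at $x$ unconditionally via the identity $R=PU+QV$ and is nonzero once $P$ is irreducible with $P\nmid Q$; this bypasses the multiplicity lemma, the operator $D$, the degree normalization, and the singular case in one stroke. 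A rough accounting of your $H-G\le A\lambda + B H/\lambda + C$ optimization gives a leading constant near $2\sqrt{2}$ on the main term, comfortably inside the stated $5$ with the $2^{\min\{p,q\}}$ and $(p+1)(q+1)$ slack available to swallow Sylvester, Mahler, and integrality losses, so the constant-chasing you flag as the main obstacle does look feasible rather than fatal.

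The genuine gap is the clause you pass over in half a line: that the absolute Siegel lemma ``produces a nonzero representative $Q\in V_{a,b}$ with $P\nmid Q$.'' It does not, as stated. Since $P(x,y)=0$, the whole subspace $P\cdot V_{a-p,b-q}$ lies inside the kernel of the linear form $Q\mapsto Q(x,y)$ on $V_{a,b}$, and the single small solution delivered by the David--Philippon / Zhang (or Roy--Thunder) lemma may perfectly well lie in that subspace, in which case $Q(x,y)=0$ is vacuous and the resultant $R$ gives no information at all. The obvious quick fixes fail: restricting to $\deg_Y Q\le q-1$ (so that $P\nmid Q$ is automatic) destroys the quotient gain and the Siegel bound degenerates to $\hproj{Q}\sim h(x)/q$, while a Gelfond-type lower bound for $\hproj{P\tilde Q}$ only rules out $P\mid Q$ when $\hproj{P}$ is not too small. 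A repair that does work: take the leading monomial $X^{d_0}Y^q$ of $P$ in the lex order with $Y>X$; the monomials $X^iY^j$ in $V_{a,b}$ that are \emph{not} of the form $X^{d_0+k}Y^{q+l}$ with $0\le k\le a-p$ and $0\le l\le b-q$ span a complement $W'$ of $P\cdot V_{a-p,b-q}$ by the usual leading-monomial triangulation, and applying Siegel within $W'$ alone forces $P\nmid Q$ while leaving your height estimate intact (the matrix entries remain monomial values $x^iy^j$, so $\hproj{\mathcal A}\le ah(x)+bh(y)$ as before). You should make this explicit, because the paper's Lemma~\ref{ConstructLemma1} avoids the whole issue by design: the constraint $\deg_Y A,\deg_Y B\le q-1<q$ makes $P\nmid A$ a one-line degree deduction, and that structural safeguard is exactly what your plan is currently missing.
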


 Abouzaid \cite{Abouzaid} proved
a related height estimate.
In his bound, the dependence on the partial
degrees and the numerical constants are slightly worse.
Quasi-equivalence of heights also follows from 
Bartolome's Theorem 1.3 \cite{Bartolome:15}, but again  with larger numerical
constants
and worse dependency on $p$ and $q$. 

It is essential that $P$ is irreducible. For example both partial degrees
of $(X^2-Y)(X-Y^2)$  equal $3$. But (\ref{SiegelIneq3}) cannot
hold for this polynomial as $h(x^2)=2h(x)$ for all algebraic $x$. It is possible to formulate
a version of Theorem \ref{QuasiequivThm} when $K\subset\IQbar$ is a number field
and if  $P\in K[X,Y]$ is irreducible. In this case 
$P$ is up to a scalar 
factor the product of polynomials which are irreducible in
$\IQbar[X,Y]$ and conjugated over $K$. Thus all factors have equal partial
degrees. 

Sometimes it is useful to bound $\height{y}$ uniformly in terms of
$\height{x}$ if the height of $x$ is large. We do this in the
following corollary. 

\begin{corollary}
\label{QuasiequivCor} Let $P,p,$ and $q$ be as in Theorem \ref{QuasiequivThm}.
If $P(x,y)=0$ with $x,y\in\IQbar$ and 
\begin{equation}
\label{QuasiequivCorHypo}
\max\left\{\frac{h(x)}{q},\frac{h(y)}{p}\right\}
\ge  100  \left(\log\left(2^{\min\{p,q\}}(p+1)(q+1)\right)+\hproj{P}\right)
\end{equation}
then $\height{y}\le 2\frac pq \height{x}$.
\end{corollary}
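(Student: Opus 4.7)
The plan is to apply Theorem \ref{QuasiequivThm} directly and distinguish two cases depending on which quantity achieves the maximum in (\ref{QuasiequivCorHypo}). To simplify notation I would set $A = h(x)/q$, $B = h(y)/p$, $M = \max\{A,B\}$, and abbreviate $L = \log(2^{\min\{p,q\}}(p+1)(q+1)) + \hproj{P}$, so that Theorem \ref{QuasiequivThm} reads $|A - B| \le 5 L^{1/2} M^{1/2}$ and the hypothesis (\ref{QuasiequivCorHypo}) reads $M \ge 100 L$.

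In the first case, $M = A$, i.e.\ $h(y)/p \le h(x)/q$, so $h(y) \le (p/q) h(x) \le 2(p/q) h(x)$ holds trivially, with no use of Theorem \ref{QuasiequivThm} at all.

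In the second (and main) case, $M = B$, so $B - A \le 5 L^{1/2} B^{1/2}$. The hypothesis $B \ge 100 L$ gives $L^{1/2} \le B^{1/2}/10$, and substituting yields $B - A \le 5 \cdot (B^{1/2}/10) \cdot B^{1/2} = B/2$. Rearranging, $A \ge B/2$, i.e.\ $h(y)/p \le 2\, h(x)/q$, which is exactly the claimed bound $h(y) \le 2(p/q)\, h(x)$.

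There is no real obstacle here; the numerical constant $100$ in (\ref{QuasiequivCorHypo}) is tuned precisely so that $5 L^{1/2} M^{1/2}$ is absorbed into $B/2$ in the second case. The only thing to double-check is that the constants $5$ from Theorem \ref{QuasiequivThm} and $100$ from the hypothesis match up as $5/\sqrt{100} = 1/2$, which is exactly what is needed for the conclusion $B \le 2A$.
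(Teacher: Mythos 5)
Your proof is correct and is essentially the same argument as in the paper: both rest on applying Theorem \ref{QuasiequivThm} with the numerical balance $5/\sqrt{100}=1/2$ to conclude $h(y)/p \le 2h(x)/q$. The paper phrases it as a proof by contradiction (assume $h(y) > 2\frac{p}{q}h(x)$ and derive $k^{1/2} < 10L^{1/2}$, contradicting (\ref{QuasiequivCorHypo})), whereas you argue directly via the case split on which term achieves the maximum; this is a cosmetic difference only.
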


The proof of Theorem \ref{QuasiequivThm} 
depends 
on the 
 theory of  functions fields
and on the Absolute Siegel Lemma
by Zhang \cite{ZhangArVar}.
 Roy and Thunder's \cite{RTAbsSiegel,RTAbsSiegelErrat} absolute Siegel Lemma
would also suffice but would lead to different numerical constants. 
Using 
 Bombieri and Vaaler's 
classical version of Siegel's Lemma instead would come at the cost of introducing
a dependency in $c(P)$ on a number  field containing the coefficients
 of $P$.

We give a short sketch of the proof of Theorem \ref{QuasiequivThm}.
Let $m$ and $n$ be large integers such that $n/m$ is approximately 
equal to $p/q$. In Section \ref{sec:AuxFunction} 
we use the Absolute Siegel Lemma to construct
polynomials $A$ and $B$ in $X$ and $Y$ with algebraic coefficients of
bounded height, not both zero, such that $P$ divides $AY^m - B$ as a
polynomial. By choosing the parameters appropriately, we can arrange
that $P\nmid A$. 
Say $P(x,y)=0$, then $A(x,y) y^m=B(x,y)$. If we assume for the moment
$A(x,y)\not=0$, then we may bound the height of $y$ in terms of the height of
$x$ by using the product formula.
In Section \ref{sec:zerobounds} 
we show that a suitable vanishing order
cannot be too large.
We then apply an   appropriate differential operator and replace $A,B$ by new 
polynomials $A',B'$ with controlled projective height and degree
such that $A'(x,y)y^m=B'(x,y)$ and $A'(x,y)\not=0$.
Thus again we get a  bound for $h(y)$ in terms of $h(x)$. 
By swapping $x$ and $y$ we get an estimate in the other direction and
 this completes the
proof if $(x,y)$ is not singular point on the vanishing locus of $P$.
Singular points can be handled directly. 
A novel aspect of our approach is that it does
 not depend on
 Eisenstein's Theorem 
which bounds
the coefficients of a power series of an algebraic function.
An explicit version of 
Eisenstein's Theorem was used in  the work of Abouzaid and Bartolome.

 Runge \cite{Runge} proved that $P(x,y)=0$ 
admits only finitely many solutions $(x,y)\in \IZ^2$
if $P\in\IQ[X,Y]$ is irreducible 
with $\deg_X P = \deg_Y P = \deg P$ and if the homogeneous part of $P$ 
of maximal degree is not a rational multiple of 
 the power of an irreducible 
polynomial in $\IQ[X,Y]$.
Runge's method is effective and  explicit upper bounds for
$\max\{|x|,|y|\}$ were obtained for example by Hilliker-Straus  
\cite{HillikerStraus}
and Walsh \cite{Walsh,WalshCorr}. They  rely
on  Eisenstein's Theorem.

We
will prove a simple and explicit version of Runge's Theorem using  
 Theorem \ref{QuasiequivThm}.

\begin{theorem} 
\label{RungeThm}
Let $P\in\IZ[X,Y]$ be irreducible in $\IQbar[X,Y]$ and
assume $d=\deg_X P = \deg_Y P = \deg P$. Furthermore, assume that the
homogeneous part of $P$ of degree $d$ is not a rational
multiple of the power of an
irreducible polynomial in $\IQ[X,Y]$. If $P(x,y)=0$ with $x,y\in\IZ$, then
\begin{equation*}
\log\max\{1,|x|,|y|\} \le 115 d^4 (\log(2d)+\hproj{P}). 
\end{equation*}
\end{theorem}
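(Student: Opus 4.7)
\emph{Overall strategy.} I plan to combine the archimedean input of Runge's classical method with Theorem \ref{QuasiequivThm} applied to a linear change of variables. Runge's hypothesis forces the leading form $P_d$ to factor over $\IQ$ with at least two distinct irreducible factors, which in turn forces any integer solution $(x,y)$ to lie close, at the archimedean place, to a line $Y = \alpha X$ for some algebraic $\alpha$ of degree $l<d$. Setting $\beta = y - \alpha x$, the Galois conjugates then force $h(\beta) \approx (1-1/l)\,h(x)$, while Theorem \ref{QuasiequivThm} applied to the translated polynomial $\tilde P(X,Z) := P(X, \alpha X+Z)$ gives a second relation between $h(x)$ and $h(\beta)$; the two together pin down $h(x)$.

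\emph{Locating $\alpha$.} Decompose $P_d = c\prod_{i=1}^k g_i^{e_i}$ into pairwise distinct irreducibles in $\IQ[X,Y]$; the Runge hypothesis means $k \ge 2$. From $P_d(x,y) = -(P-P_d)(x,y)$ one obtains
\[\prod_{i=1}^k |g_i(x,y)|^{e_i} \le C\,\max\{|x|,|y|\}^{d-1}\]
with $C$ explicit in $\hproj{P}$ and $d$. Each $g_i$ has total degree strictly less than $d$ and is irreducible, so it does not divide $P$; hence $|g_i(x,y)| \ge 1$ for all but finitely many solutions. A pigeonhole argument then yields some $i$ with $|g_i(x,y)| \le C'\max\{|x|,|y|\}^{\deg g_i - 1/(k e_i)}$. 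Writing $g_i(X,Y) = c_i\prod_j(Y - \alpha_{i,j} X)$ over $\IQbar$, this smallness translates into $|y - \alpha x| \le C''\max\{|x|,|y|\}^{1-\eta}$ at the archimedean place for the root $\alpha = \alpha_{i,j^*}$ closest to $y/x$, with $\eta > 0$ a polynomial expression in $1/d$.

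\emph{Quasi-equivalence on the translated polynomial.} Let $L = \IQ(\alpha)$, $l = [L:\IQ] \le \deg g_i < d$, and $\beta = y - \alpha x \in L$. A place-by-place analysis of the product formula for $h(\beta)$ — the distinguished archimedean place contributes essentially $0$; each of the other $l-1$ archimedean embeddings contributes $\log|y - \sigma(\alpha) x| \approx \log|x| + O(h(\alpha))$; and the non-archimedean places contribute at most $h(\alpha)$ since $x,y\in\IZ$ — yields
\[h(\beta) = (1 - 1/l)\,h(x) + O(h(\alpha)).\]
The polynomial $\tilde P(X,Z) := P(X, \alpha X + Z)$ is irreducible in $\IQbar[X,Z]$ (the substitution is linear and invertible), has $\deg_Z \tilde P = d$, and — crucially — $\deg_X \tilde P \le d-1$, because the $X^d$-coefficient of $\tilde P$ equals $P_d(1,\alpha) = 0$. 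Applying Theorem \ref{QuasiequivThm} to $\tilde P$ at the point $(x,\beta)$ and substituting the preceding display, the left-hand side reduces to
\[\frac{d - l}{d\,\deg_X \tilde P\cdot l}\,h(x) + O(h(\alpha)),\]
which is strictly positive since $l < d$. Comparing this with the $c(\tilde P)\,h(x)^{1/2}$ upper bound yields an explicit bound on $h(x)$; invoking Theorem \ref{QuasiequivThm} a second time on $P$ itself transfers the bound to $h(y)$, and hence to $\log\max\{1,|x|,|y|\}$.

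\emph{Main obstacle.} The main difficulty is reaching the stated constant $115\,d^4(\log(2d) + \hproj{P})$: this demands tightly coordinated numerical control of the constant $C$ in the archimedean estimate, the error $O(h(\alpha))$ in the height of $\beta$, and above all $\hproj{\tilde P}$, which enters $c(\tilde P)^2$ and grows through $d$, $\hproj{P}$, and $h(\alpha)$ via the substitution $Y \mapsto \alpha X + Z$. The degenerate cases — $\alpha$ a multiple root of $P_d(1,T)$ (which only further lowers $\deg_X \tilde P$ and if anything helps), and the finite exceptional set where some $g_i(x,y) = 0$ — need direct but routine handling.
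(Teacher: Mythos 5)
Your overall architecture (classical Runge pigeonhole at the archimedean place, then a height computation for $\beta=y-\alpha x$ played off against Theorem \ref{QuasiequivThm} for the sheared polynomial $\tilde P(X,Z)=P(X,\alpha X+Z)$) shares with the paper the key observation that the shear kills the $X^d$-coefficient, so $\deg_X\le d-1$; but as designed it cannot prove the statement, and the obstacle you flag at the end is not a matter of bookkeeping. Your contradiction rests on the mismatch between the archimedean estimate $h(\beta)\approx\frac{l-1}{l}h(x)$ and the quasi-equivalence prediction $h(\beta)\approx\frac{\deg_X\tilde P}{d}\,h(x)$, and this mismatch is only $\frac{d-l}{d\,l\,\deg_X\tilde P}\,h(x)$. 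In the admissible worst case $l=\deg g_i=d-1$ (e.g. $P_d$ a linear form times an irreducible of degree $d-1$, a case the pigeonhole can force on you) the saving is $\asymp h(x)/d^3$, and comparing it with $c(\tilde P)h(x)^{1/2}$, where $c(\tilde P)^2\asymp d(\log d+\hproj{P})$, yields $h(x)\ll d^6(\log(2d)+\hproj{P})$ after squaring --- Walsh's exponent, not $115d^4$. Two further points are not routine: (a) the degenerate case $\deg_X\tilde P<d-1$ does \emph{not} ``help'': lowering $p'=\deg_X\tilde P$ moves $p'/d$ from $(d-1)/d$ towards $(l-1)/l$, and when $p'=d-d/l$ (possible when $l\mid d$) the two height estimates coincide and your method gives no bound at all; nothing in your argument excludes this configuration, and when $p'<d-d/l$ you need the \emph{lower} bound in $h(\beta)=(1-1/l)h(x)+O(h(\alpha))$, which (like the step from ``$|g_i(x,y)|$ small'' to ``$|y-\alpha x|$ small for the nearest root'') requires Liouville-type root-separation estimates you never supply --- exactly the Eisenstein-flavoured input the paper is built to avoid. (b) The ``finite exceptional set'' where some $g_i(x,y)=0$ must be bounded explicitly, not just declared finite.

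The paper's route is structurally different and this is where the $d^4$ comes from. It never localises $(x,y)$ near one line by a pigeonhole. Instead, for \emph{every} root $t$ of $P_d(X,1)$ it applies Corollary \ref{QuasiequivCor} and Theorem \ref{QuasiequivThm} to $R_t(X,Z)=P(X,t^{-1}(X-Z))$ (Lemmas \ref{apps:ConstructR} and \ref{apps:hzSmall}) to get the upper bound $h(x-ty)\le\frac{d-1}{d}h(x)+10.4\,d(\log(2d)+\hproj{P})^{1/2}h(x)^{1/2}$, then notes that \emph{some} archimedean embedding realises this bound. Doing this for two non-conjugate roots $t',t''$ (this is where the hypothesis on $P_d$ enters) and eliminating $y$ between the two small linear forms recovers $\log|x|$ itself up to controlled factors, so the self-improvement is a full $\frac1d\log|x|$ rather than your $\frac{d-l}{d\,l\,p'}h(x)$; squaring then gives $115d^4$. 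Only the easy upper bound on the heights $h(x-t'y)$, $h(x-t''y)$ is needed, no nearest-root analysis, no root separation, and the exceptional sets disappear. If you want to salvage your approach, the lesson is to extract the archimedean smallness from the height inequality itself (at two non-conjugate roots) rather than feeding an archimedean smallness statement back into a height identity.
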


 Walsh's \cite{Walsh,WalshCorr} result holds for a larger
class of polynomials but our dependency on the degree is $d^4$ instead
of his $d^6$.

It would be interesting to see if a 
 more sophisticated version of Runge's Theorem, such as
 Bombieri's on page 304 \cite{BombieriWeilDecomp},
can be proved using our Theorem \ref{QuasiequivThm}.

A variation of this paper appeared in the appendix of the
author's 2007 Ph.D. thesis. 
He thanks his supervisor David Masser for support throughout those years.
He is also grateful to Umberto Zannier for pointing out Zhang's
version
of the Absolute Siegel Lemma and to the referee for helpful comments
and corrections.

\section{Construction using the Absolute Siegel Lemma}\label{sec:AuxFunction}

We begin by setting up notation, our reference for heights
is Chapter 1.5 in Bombieri and Gubler's book \cite{BG}. 

Let $K$ be a number 
field. A place $v$ of $K$ is an absolute value $|\cdot|_v:K\rightarrow
[0,\infty)$ such that either
  $|x|_v=\dmax{x}{-x}$ for all $x\in\IQ$ 
or $|\cdot|_v$ coincides with the $p$-adic absolute value on $\IQ$ for
a prime $p$ and $|p|_v=1/p$. 
In the former case we call $v$ infinite and in the latter we call it finite.
If $v$ is infinite, then $|x|_v = |\sigma(x)|$ 
for  a ring homomorphism $\sigma:K\rightarrow\IC$
that is uniquely determined up-to complex conjugation. We set $d_v=1$
if 
$\sigma(K)\subset\IR$ and $d_v=2$ else wise. 
A finite place $v$ is induced by a maximal ideal in the ring of
integers of $K$. 
We set  $d_v$ to be  the product of the ramification index and the
residue degree attached to this prime ideal. 
We let $\places{K}$ denote the set of all
places of $K$. 

The absolute logarithmic Weil height of $x\in K$ is
\begin{equation}
\label{def:weilheight}
  h(x) = \frac{1}{[K:\IQ]}\sum_{v\in\places{K}} d_v \log \max\{1,|x|_v\}.
\end{equation}
It is well-defined and attains the same value at $x$ when evaluated
using any number
field $F\supset K$. 

For $a\in K^N$ we define
$|a|_v$ to be the maximum of the absolute values of the coordinates of
 $a$ with respect to $v$. 
If $P$ is a polynomial in any number of variables with coefficients in
$K$, then $|P|_v$ denotes
the maximum of the absolute values of the coefficients of
 $P$ with respect to $v$. 
The projective
height of $a\not=0$ is 
\begin{equation}
\label{def:projheight}
\hproj{a} = \frac{1}{[K:\IQ]} \sum_{v\in\places{K}} d_v \log |a|_v.
\end{equation}
By the product formula, cf. Proposition 1.4.4 \cite{BG}, $\hproj{a}$ is invariant under replacing $a$ by a non-zero scalar
multiple of itself, so $\hproj{a}\ge 0$. 
If $P$ is a non-zero polynomial with algebraic coefficients we 
set $\hproj{P}$ to be the projective height of the vector whose
coordinates are the non-zero coefficients of $P$. 
If $Q$ is a further polynomial with algebraic coefficients it will
 be useful to set $\hproj{P,Q} = \hproj{P+TQ}$ 
where $T$ is an unknown that does not appear in $P$ or $Q$. 

For any place $v$ of $K$ and integer $n\ge 1$ it is convenient to define
$\delta_v(n) = \max\{1,|n|_v\}$.

We start by proving a lemma concerning simple properties of
places and heights. It corresponds to Lemma  A.1 in the author's thesis, whose part (i) is
incorrect. 

\begin{lemma}
\label{AuxLemma}
Let $K$ be a field and $A,B\in K[X,Y]$.
\begin{enumerate}
\item[(i)] If $K$ is a number field and if $v\in\places{K}$, then
\begin{alignat}1
\nonumber
|A+B|_v &\le \delta_v(2)\max\{|A|_v,|B|_v \}, \\
\nonumber
|AB|_v &\le \delta_v((\min\{\deg_X A,\deg_X B\}+1)(\min\{\deg_Y A,\deg_Y B\}+1))|A|_v |B|_v.
\end{alignat}
\item[(ii)] If $K=\IQbar$ and $x,y\in\IQbar$ with $A(x,y)\not=0$, then
\begin{alignat}1
\nonumber
\height{&B(x,y)/A(x,y)} \le \hproj{A,B} + \max\{\deg_X A,\deg_X B\}\height{x}
\\ \nonumber
&+ \max\{\deg_Y A,\deg_Y B\}\height{y} \\ \nonumber
&+ \log\max\{(\deg_X A+1)(\deg_Y A+1),
(\deg_X B+1)(\deg_Y B+1)\}
\end{alignat}
\item[(iii)] Say $K=\IQbar$ and $x,y\in\IQbar$ with $A(x,y)=0$. 
If $A$ is not divisible in $\IQbar[X,Y]$ by any $X-\alpha$ with
 $\alpha\in\IQbar$, then
\begin{equation}
\label{AuxLemmaPart3}
\height{y} \le \hproj{A}+ (\deg_X A) \height{x} + \log((\deg_X A+1)\deg_Y A).
\end{equation}
\end{enumerate}
\end{lemma}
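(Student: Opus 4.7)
Part (i) is a direct termwise estimate. For the sum, the ordinary triangle inequality gives a factor $2$ at archimedean places while finite places satisfy the ultrametric inequality, both captured by $\delta_v(2)$. For the product, each coefficient of $AB$ is a sum of at most $(\min\{\deg_X A,\deg_X B\}+1)(\min\{\deg_Y A,\deg_Y B\}+1)$ products of coefficients, each of absolute value at most $|A|_v|B|_v$; the triangle inequality then yields the stated bound.

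For part (ii), pick a number field $K$ containing the coefficients of $A$ and $B$ together with $x$ and $y$. Since $A(x,y)\ne 0$, the product formula gives
\[
\height{B(x,y)/A(x,y)} = \frac{1}{[K:\IQ]}\sum_{v\in\places{K}} d_v\log\max\{|A(x,y)|_v,|B(x,y)|_v\}.
\]
Expanding each polynomial monomial by monomial and applying part (i) place by place produces the pointwise estimate
\[
\max\{|A(x,y)|_v,|B(x,y)|_v\} \le C_v\max\{|A|_v,|B|_v\}\max\{1,|x|_v\}^{N}\max\{1,|y|_v\}^{M},
\]
where $N,M$ denote the maxima of the partial $X$- and $Y$-degrees of $A$ and $B$ and $C_v=\delta_v$ of the combinatorial factor $\max\{(\deg_X A+1)(\deg_Y A+1),(\deg_X B+1)(\deg_Y B+1)\}$. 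Summing the logarithms weighted by $d_v/[K:\IQ]$, and using the identity $\sum_v d_v\log\delta_v(n)/[K:\IQ]=\height{n}=\log n$ for a positive integer $n$, yields exactly the asserted inequality, with $\hproj{A,B}$ coming from the sum over $\max\{|A|_v,|B|_v\}=|A+TB|_v$.

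Part (iii) reduces to part (ii). Write $A(X,Y)=\sum_{j=0}^{d_Y}c_j(X)Y^j$ with $c_j\in\IQbar[X]$ and $d_Y=\deg_Y A$. The hypothesis that no $X-\alpha$ divides $A$ says that the $c_j(X)$ have no common root in $\IQbar$, so the tuple $(c_0(x),\ldots,c_{d_Y}(x))$ is nonzero; let $e$ be its largest index with $c_e(x)\ne 0$. Since $A(x,Y)$ is then a nonzero polynomial in $Y$ with $A(x,y)=0$, one necessarily has $e\ge 1$. The equation rearranges to $y^e = B'(x,y)/A'(x,y)$ with $A'(X,Y)=c_e(X)$ and $B'(X,Y)=-\sum_{j<e}c_j(X)Y^j$, and $A'(x,y)\ne 0$ by construction. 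The nonzero coefficients of $A'$ and $B'$ form a subset of those of $A$, so $\hproj{A',B'}\le\hproj{A}$; moreover $\deg_X A',\deg_X B'\le\deg_X A$ and $\max\{\deg_Y A',\deg_Y B'\}\le e-1$. Applying (ii) gives $e\,\height{y}=\height{y^e}\le\hproj{A}+(\deg_X A)\height{x}+(e-1)\height{y}+\log((\deg_X A+1)e)$; absorbing the $(e-1)\height{y}$ term on the left and using $e\le d_Y=\deg_Y A$ yields (\ref{AuxLemmaPart3}). The technical point to get right is choosing $e$ as the \emph{top} nonvanishing index, which simultaneously ensures $A'(x,y)\ne 0$ and keeps $\max\{\deg_Y A',\deg_Y B'\}$ strictly less than $e$, so that the $\height{y}$ term on the right can be absorbed.
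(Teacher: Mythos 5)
Your proposal is correct and follows essentially the same route as the paper: part (i) by termwise triangle/ultrametric estimates, part (ii) by the product formula plus local bounds on $|A(x,y)|_v$ and $|B(x,y)|_v$, and part (iii) by isolating the maximal index $e$ (the paper's $q'$) with $c_e(x)\neq 0$, which the hypotheses force to satisfy $e\ge 1$. The only cosmetic difference is that you derive (iii) by invoking (ii) for $y^e=B'(x,y)/A'(x,y)$ and absorbing the $(e-1)h(y)$ term, whereas the paper redoes the local estimates directly; both yield the stated bound.
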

\begin{proof}The first inequality in (i) follows  from the triangle
  inequality. To prove the second inequality we write $A=\sum_{i,j}
  a_{ij}X^iY^j$ and $B=\sum_{i,j}b_{ij}X^iY^j$ . Then
  $AB=\sum_{i,j}c_{ij}X^iY^j$ with 
\begin{equation}
\nonumber
c_{ij} = \sum_{\substack{i'+i''=i \\ j'+j''=j}} a_{i'j'}b_{i''j''}.
\end{equation}
The sum above involves at most 
$(1+\min\{\deg_X A,\deg_X B\})(1+\min\{\deg_Y A,\deg_Y B\})$ non-zero terms. 
Hence the desired
inequality follows from the triangle inequality.

Now we prove part (ii). The product formula
 implies
\begin{equation}
\label{BAbound}
\height{B(x,y)/A(x,y)}
= \frac{1}{[F:\IQ]} \sum_{v\in \places{F}} d_v \log \max\{|A(x,y)|_v,|B(x,y)|_v\}
\end{equation}
where $F$ is a number field containing $x$, $y$ and the coefficients of $A$
and $B$.
Note that the polynomial $A$ involves at most $(\deg_X A+1)(\deg_Y A+1)$
non-zero coefficients, hence the triangle inequality gives
\begin{alignat}1
\nonumber
|A&(x,y)|_v \le \\ \nonumber &\delta_v((\deg_X A+1)(\deg_Y A+1))|A|_v 
\max\{1,|x|_v\}^{\deg_X A}\max\{1,|y|_v\}^{\deg_Y A}
\end{alignat}
for each $v\in \places{F}$.
Of course a similar inequality holds for $|B(x,y)|_v$. These inequalities
inserted into (\ref{BAbound}) conclude this part of the lemma.

Our proof for Part (iii) follows the lines of Proposition 5 \cite{BMSprindzhuk}.
Say $A=a_q Y^q+\cdots +a_0$ with $a_i\in\IQbar[X]$ and $a_q\not=0$,
so $q=\deg_Y A$. 
By hypothesis
there exists a maximal $q'\ge 1$ such that $a_{q'}(x)\not=0$.
Let $F$ be a number field that
contains $x,y$, and the coefficients of $A$. 
If $v\in\places{F}$,  then 
\begin{equation}
\nonumber
|a_{q'}(x) y^{q'}|_v \le \delta_v(q') \max_{0\le k \le q'-1} \{|a_k(x)|_v\} 
\max\{1,|y|_v\}^{q'-1}
\end{equation}
and so
\begin{equation}
\nonumber
\max\{1,|y|_v\} \le \delta_v(q') \max_{0\le k\le q'}
\{|a_k(x)|_v / |a_{q'}(x)|_v \}.
\end{equation}
We use the last inequality, $q'\le q$, the product formula, and (\ref{def:weilheight}) to deduce
\begin{equation}
\label{auxlemmahybound}
\height{y} \le \log q + \frac{1}{[F:\IQ]}
\sum_{v\in\places{F}} d_v \log \max_{0\le k\le q}\{|a_k(x)|_v\}.
\end{equation}
The triangle inequality implies 
\begin{equation}
\nonumber
|a_k(x)|_v\le \delta_v(\deg_X A+1)\max\{1,|x|_v\}^{\deg_X A}|A|_v
\end{equation}
 for any $v\in\places{F}$.
 We apply this inequality to (\ref{auxlemmahybound}) to complete
the proof.
\end{proof}

We will sometimes apply property (iii) of the previous lemma to a non-zero
$A\in\IQbar[Y]$ and $x=0$. Inequality (\ref{AuxLemmaPart3}) then
reduces to $\height{y}\le \hproj{A}+\log (\deg_Y A)$.

We  introduce a basic notion of a sparsity. If
$\mathcal{A}=(a_{ij})$ is an $M\times N$
matrix, then we set 
\begin{equation}
\nonumber
S(\mathcal{A}) = \max_{1\le i\le M}\#\{j : a_{ij}\not = 0\}.
\end{equation}
If $\mathcal{A}$ has  coefficients in a number field $K$ and is
non-zero we define $\hproj{\mathcal{A}}$ to be the projective height of
$\mathcal{A}$ taken as an element of $K^{MN}\ssm\{0\}$,
as in (\ref{def:projheight}).

Next we adapt
Zhang's  Absolute Siegel Lemma, as it was used
by David and Philippon \cite{DPTores},
 to our notation. See also Bartolome's Section 2.4 \cite{Bartolome:15}. 

\begin{lemma}
\label{SiegelLemma}
Let $\mathcal{A} \in \text{Mat}_{MN}(\IQbar)$ have rank
$M<N$. Then there
exists $v\in\IQbar^{N}\ssm\{0\}$ such that $\mathcal{A}v=0$ and
\begin{equation}
\nonumber
\height{v} \le  
\frac{M}{N-M}\left(\frac 12 \log S(\mathcal{A}) + \hproj{\mathcal{A}}\right)
+\frac{\log(N-M)}{2}.
\end{equation}
\end{lemma}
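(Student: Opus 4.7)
The plan is to reduce Lemma~\ref{SiegelLemma} to Zhang's Absolute Siegel Lemma in its standard form: for every subspace $V \subset \IQbar^N$ of dimension $d\ge 1$ there exists a non-zero $v\in V$ with $\height{v}\le \heightS(V)/d + \tfrac{1}{2}\log d$, where $\heightS(V)$ denotes the Arakelov height of $V$. Since $\rank \mathcal{A}=M$, the subspace $V=\ker\mathcal{A}$ has dimension $N-M$, and Zhang's bound supplies a non-zero $v$ satisfying $\mathcal{A}v=0$. What remains is to establish
\[
\heightS(\ker\mathcal{A}) \le M \hproj{\mathcal{A}} + \frac{M}{2}\log S(\mathcal{A}),
\]
which after division by $N-M$ yields the inequality in the statement.

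To prove this height bound I will pass to the orthogonal complement: Arakelov heights of subspaces are invariant under duality, so $\heightS(\ker\mathcal{A}) = \heightS(V^\perp)$, where $V^\perp$ is the $M$-dimensional row span of $\mathcal{A}$. Via the Pl\"ucker embedding, $\heightS(V^\perp) = \hproj{m}$, where $m$ is the vector whose coordinates are the $M\times M$ minors of $\mathcal{A}$. Thus it suffices to estimate $|m|_v$ at each place $v$ of a number field $K$ containing all entries of $\mathcal{A}$, and then sum with weights $d_v/[K:\IQ]$.

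For a finite place $v$, every maximal minor is an alternating sum of products of $M$ entries of $\mathcal{A}$, one from each row. The ultrametric inequality gives $|m|_v \le \prod_{i=1}^M |r_i|_v \le |\mathcal{A}|_v^M$, where $r_i$ is the $i$-th row. For an archimedean place $v$, Hadamard's inequality yields $\|r_1 \wedge \cdots \wedge r_M\|_{v,2} \le \prod_{i=1}^M \|r_i\|_{v,2}$; since each row of $\mathcal{A}$ has at most $S(\mathcal{A})$ non-zero entries, $\|r_i\|_{v,2} \le \sqrt{S(\mathcal{A})}\,|r_i|_v \le \sqrt{S(\mathcal{A})}\,|\mathcal{A}|_v$. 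Because the sup-norm $|m|_v$ is dominated by $\|r_1\wedge\cdots\wedge r_M\|_{v,2}$, this gives $|m|_v \le S(\mathcal{A})^{M/2}|\mathcal{A}|_v^M$. Combining the local bounds produces the desired estimate on $\heightS(V^\perp)$.

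The main obstacle is purely bookkeeping: Hadamard naturally produces an $\ell^2$-bound while $\hproj{\cdot}$ is defined via sup-norms, and the factor $S(\mathcal{A})^{M/2}$ in the statement appears precisely from the archimedean conversion $\|r_i\|_{v,2}\le \sqrt{S(\mathcal{A})}\,|r_i|_v$. Once the two local estimates above are recorded, feeding $\heightS(\ker\mathcal{A})\le M(\tfrac{1}{2}\log S(\mathcal{A})+\hproj{\mathcal{A}})$ into Zhang's inequality gives the stated bound on $\height{v}$ with no further loss.
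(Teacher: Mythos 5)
Your proposal is correct and follows essentially the same route as the paper: reduce to the absolute Siegel lemma applied to the kernel (the $\heightS(V)/d+\tfrac12\log d$ form is exactly what the paper extracts from David--Philippon's Lemme 4.7), identify $\heightS(\ker\mathcal{A})$ with the height of the vector of $M\times M$ minors by duality, and bound those minors using Hadamard with the sparsity $S(\mathcal{A})$ accounting for the archimedean $\ell^2$-versus-sup-norm conversion. The paper gets the same local estimate by citing Fischer's inequality from Bombieri--Gubler rather than writing out the Hadamard computation, but the content is identical.
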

\begin{proof}
Let $\epsilon > 0$. 
We apply David and Philippon's Lemme 4.7 \cite{DPTores}
by which there exists 
$v\in\IQbar^N\ssm\{0\}$ with $\mathcal{A}v=0$ and
\begin{equation*}
\height{v} \le \frac{1}{N-M} h(V) + \frac{1}{N-M} 
\sum_{i=1}^{N-M-1} \sum_{j=1}^i \frac{1}{2j}+\epsilon
\end{equation*}
where $V\subset\IQbar^N$ is the kernel of 
$\mathcal{A}$ and $h(V)$ is the logarithmic height of the
vector space $V$ as defined just before Lemme 4.7  \cite{DPTores}. 
We have
$(N-M)^{-1}\sum_{i=1}^{N-M-1}\sum_{j=1}^i (2j)^{-1} < 2^{-1}\log(N-M)$, 
so 
\begin{equation}
  \label{SiegelLemmaIneq2}
h(v) \le \frac{1}{N-M}h(V)+\frac{\log{(N-M)}}{2}. 
\end{equation}
if $\epsilon>0$ is small enough. 
We note that the  height used by David and Philippon
 uses the Euclidean norm at the infinite places. It is at least as large
 as $h(v)$
 which uses the supremum norm at all places.
By Corollary 2.8.12  \cite{BG}
the height $h(V)$ is equals $h_{\mathrm{Ar}}(\mathcal{A}^t)$ as in
Remark 2.8.7 \loccit{}
where $\mathcal{A}^t$ is the transpose of $\mathcal{A}$.
In other words, $h(V)$ 
is  the height of the  vector in $\IQbar^{{N \choose M}}$ 
whose entries are
the determinants of all $M\times M$ minors of $\mathcal{A}$ 
with the Euclidean norm taken at
the infinite places and maximum norm at the finite places. 

By Fischer's Inequality, cf. Remarks and 2.8.9 and 2.9.8 \cite{BG}, 
we find $h(V) \le (M/2) \log S(\mathcal{A}) + M \hproj{\mathcal{A}}$;
here we used that each row of $\mathcal{A}$ contains
at most $S(\mathcal{A})$ non-zero entries.
\end{proof}

\begin{lemma}
\label{ConstructLemma1}
Let $P\in\IQbar[X,Y]$ with $p=\deg_X P \ge 1$ and $q=\deg_Y P \ge 1$. Furthermore,
let $m$ and $n$ be integers with $m\ge 2q+1$ and $n\ge p$. If
$t=q(n+1)-mp\ge 1$,  there exist $A,B\in\IQbar[X,Y]$ with 
$P\nmid A$, 
\begin{alignat}1
\label{ConstructionAssertion0}
AY^m-B \in P\cdot\IQbar[X,Y] \ssm\{0\}, \quad
\deg_X A,\deg_X B \le n,\quad \deg_Y A,\deg_Y B \le q-1,
\end{alignat}
and
\begin{alignat}1
\label{ConstructionAssertion1}
\hproj{A,B} \le \frac{m(n-p+1)}{t} \left(\log\left((p+1)(q+1)\right) + \hproj{P}\right) + \frac{\log(2nq)}{2}.
\end{alignat}
\end{lemma}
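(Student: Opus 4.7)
The plan is to parametrise by $C$ alone, letting $A$ and $B$ be determined from $CP$, and to apply Lemma \ref{SiegelLemma} to produce a small-height $C$.

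First I would let $C\in\IQbar[X,Y]$ have $\deg_X C\le n-p$ and $\deg_Y C\le m-1$, giving $N=m(n-p+1)$ free parameters. I would define $A$ by taking $a_{ij}$ to be the coefficient of $X^iY^{j+m}$ in $CP$ for $0\le i\le n$ and $0\le j\le q-1$, and then set $B:=AY^m-CP$. The degree bounds on $A$ and $B$ in $X$ are then automatic, as is $\deg_Y A\le q-1$; requiring $\deg_Y B\le q-1$ amounts to the vanishing of the coefficient of $X^iY^j$ in $CP$ for every $(i,j)\in[0,n]\times[q,m-1]$, which is a homogeneous linear system $\mathcal{A}c=0$ with at most $(n+1)(m-q)$ rows.

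Next I would set up the Siegel application. The entries of $\mathcal{A}$ are coefficients of $P$, so $\hproj{\mathcal{A}}\le \hproj{P}$, and each row has at most $(p+1)(q+1)$ nonzero entries, giving $S(\mathcal{A})\le(p+1)(q+1)$. The count $N-(n+1)(m-q)=q(n+1)-mp=t$ shows that the solution space has dimension at least $t\ge 1$. If the nullity strictly exceeds $t$, I would adjoin unit-vector rows of the form $c_{ij}=0$ to reduce it to exactly $t$; these rows, having a single entry equal to $1$, affect neither $S(\mathcal{A})$ nor $\hproj{\mathcal{A}}$. Lemma \ref{SiegelLemma} then supplies a nonzero $C$ with $\hproj{C}\le\bigl(\frac{m(n-p+1)}{t}-1\bigr)\bigl(\frac{1}{2}\log((p+1)(q+1))+\hproj{P}\bigr)+\frac{1}{2}\log t$.

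To pass from $\hproj{C}$ to $\hproj{A,B}$, I would note that the coefficients of both $A$ and $B$ are among those of $CP$ (up to sign), and Lemma \ref{AuxLemma}(i) gives $|A|_v,|B|_v\le\delta_v((p+1)(q+1))|C|_v|P|_v$. Summing over places yields $\hproj{A,B}\le \hproj{C}+\hproj{P}+\log((p+1)(q+1))$. The hypothesis $m\ge 2q+1$ forces $m(n-p+1)\ge 2t$, so the additive $\hproj{P}+\log((p+1)(q+1))$ can be absorbed into the main $m(n-p+1)/t$ factor; combined with $t\le q(n+1)\le 2nq$ to replace $\log t$ by $\log(2nq)$, this gives (\ref{ConstructionAssertion1}).

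The structural claims in (\ref{ConstructionAssertion0}) follow by inspection. Since $C\ne 0$ and $P\ne 0$ we have $CP\ne 0$, so $AY^m-B\ne 0$. If $A=0$ then $B=-CP$, but $\deg_Y CP\ge q$ while $\deg_Y B\le q-1$, a contradiction; hence $A\ne 0$. The inequality $\deg_Y A\le q-1<q=\deg_Y P$ then forces $P\nmid A$. I expect the main technical obstacle to be the numerical absorption step in the height conversion, which depends on the precise slack $m(n-p+1)\ge 2t$ arising from $m\ge 2q+1$ and on the sharp bound $M/(N-M)\le m(n-p+1)/t-1$ that is available only after engineering the nullity to equal $t$.
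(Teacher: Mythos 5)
Your construction is essentially the paper's: parametrise by $C$ (the paper calls it $\mathfrak{Q}$), express the vanishing of the middle $Y$-coefficients of $CP$ as a linear system, apply Lemma~\ref{SiegelLemma}, and pass from $\hproj{C}$ to $\hproj{A,B}=\hproj{CP}$ via Lemma~\ref{AuxLemma}(i). Where you deviate is in how you control $N-M$: the paper bounds the rank $M$ from below by observing that no nontrivial linear combination of $X^kY^lP$ ($0\le k\le n-p$, $q\le l\le m-q-1$) can be of the form $AY^m-B$, which gives $M\ge(n-p+1)(m-2q)$ and hence $N-M\le 2nq$ directly; you instead force $N-M=t$ exactly by appending unit-vector rows.

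The gap is in the assertion that these added rows ``affect neither $S(\mathcal{A})$ nor $\hproj{\mathcal{A}}$.'' The claim about $S$ is fine, but the claim about $\hproj{\mathcal{A}}$ is false: appending an entry equal to $1$ replaces $|\mathcal{A}|_v$ by $\max\{1,|\mathcal{A}|_v\}$, i.e.\ turns the projective height into the (generally larger) affine height. Concretely, if every coefficient of $P$ lies in $2\IZ$, then $|P|_2<1$ and the augmented matrix satisfies $\hproj{\mathcal{A}'}>\hproj{\mathcal{A}}$, so the inequality $\hproj{\mathcal{A}'}\le\hproj{P}$ you feed into Lemma~\ref{SiegelLemma} is not justified. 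The gap is easily repaired — scale the appended unit rows by a fixed nonzero coefficient of $P$, or normalise $P$ so that one coefficient equals $1$ (harmless, since the statement is invariant under scaling $P$) — but as written the step fails. A related, smaller looseness: to apply Lemma~\ref{SiegelLemma} you need the augmented matrix to have full row rank, so you should first select a maximal independent subset of the original rows and then extend by unit vectors; merely ``adjoining'' rows does not guarantee this. Once patched, your numerical bookkeeping (using $m(n{-}p{+}1)\ge 2t$ to absorb the extra $\hproj{P}+\log((p+1)(q+1))$, and $t\le q(n{+}1)\le 2nq$ to replace $\log t$ by $\log(2nq)$) does close, and in fact yields a marginally sharper constant because you keep the factor $\tfrac12$ that the paper discards.
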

\begin{proof}
Let $\mathfrak{Q}=\sum_{k,l} q_{kl} X^k Y^l\in\IZ[X,Y,q_{kl}]$ 
with $\deg_X\mathfrak{Q} = n-p$,
 $\deg_Y\mathfrak{Q}=m-1$, and where the $q_{kl}$ are treated as
unknowns. We define  linear forms $f_{ij}\in
\IQbar[q_{kj}]$ for $0\le i \le n$, $0\le j\le m+q-1$ by
\begin{equation}
\nonumber
P \mathfrak{Q} = \sum_{i,j} f_{ij} X^i Y^j.
\end{equation}
Each  non-zero coefficient of $f_{ij}$ is a
 coefficient of $P$. So
\begin{equation}
\label{ConstructionEq}
f_{ij} = 0\quad (0\le i \le n, \quad q\le j \le m-1)
\end{equation}
is a system of linear equations of a certain rank $M$ in the
$N = (n-p+1)m$
unknowns $q_{kl}$. We have
\begin{equation}
\label{BoundNMBelow}
M \le (n+1)(m-q) = N - t.
\end{equation}
Because $N-M \ge t \ge 1$ there is a non-trivial solution. 
Any such solution gives rise to a non-zero polynomial $Q\in \IQbar[X,Y]$ 
such that the coefficients of $PQ$ satisfy (\ref{ConstructionEq}) 
and hence $PQ = AY^m-B$ for unique polynomials $A,B\in \IQbar[X,Y]$ with $\deg_X A,\deg_X B\le n$ and 
$\deg_Y A,\deg_Y B\le q-1$. The terms in $AY^m$ and $B$ do not overlap,
  hence $\hproj{A,B}= \hproj{PQ}$. 

The final
term in the upper bound (\ref{SiegelLemmaIneq2}) 
works against us if $N-M$ is large. We
now work out a lower bound for $M$.
A non-trivial $\IQbar$-linear combination of $X^k Y^lP$ where $0\le k\le n-p$ and
$q\le l\le m-q-1$ is not of the form $AY^m-B$ with $A$ and $B$ 
satisfying the degree bounds in (\ref{ConstructionAssertion0}). 
Recall that $m\ge 2q+1$, so 
$M \ge (n-p+1)(m-2q) \ge n-p+1 \ge 1$ and hence
\begin{equation}
\label{BoundNMAbove}
N-M \le (n-p+1)(m-(m-2q))\le 2nq.
\end{equation}

We will apply Siegel's Lemma to find a solution $Q$ with small
projective
height. 
We choose a subset of the linear forms $f_{ij}$ ($0\le i\le n$, $q\le
j\le m-1$) with rank
$M$ and use the coefficients of each such linear form to define a row in an 
$M\times N$ matrix $\mathcal{A}$.
The non-zero entries of $\mathcal{A}$ are coefficients of $P$,  
hence $\hproj{\mathcal{A}} \le \hproj{P}$. Furthermore, by definition each
$f_{ij}$ involves at most $(p+1)(q+1)$ non-zero coefficients and hence
$S(\mathcal{A}) \le (p+1)(q+1)$. 
By Lemma \ref{SiegelLemma} and our discussion above 
there exists a non-zero solution $Q\in \IQbar[X,Y]$ of 
(\ref{ConstructionEq}) that satisfies
\begin{equation}
\label{hQBound0}
\hproj{Q} \le  
\frac{M}{N-M}\left(\frac 12\log((p+1)(q+1))+ \hproj{P}\right) + \frac{\log(N-M)}{2}. 
\end{equation}

Lemma \ref{AuxLemma}(i)
implies $\hproj{PQ}\le   \log((p+1)(q+1))+\hproj{P}+\hproj{Q}$.
Furthermore, we use the inequalities (\ref{hQBound0}), (\ref{BoundNMBelow}), and
(\ref{BoundNMAbove}) to conclude that $\hproj{PQ}$ is at most
\begin{alignat}1
\nonumber
& \log((p+1)(q+1)) + \hproj{P}  + 
\frac{M}{t}\left(\log((p+1)(q+1)) + \hproj{P}\right) 
+ \frac{\log(2nq)}{2}
\\ \nonumber
&\quad = \frac{M+t}{t} \left(\log((p+1)(q+1)) + 
\hproj{P}\right)  +  \frac{\log(2nq)}{2}
\\ 
\nonumber &\quad\le\frac Nt \left(\log\left((p+1)(q+1)\right) +  \hproj{P}\right) + \frac {\log(2nq)}{2}.
\end{alignat}
This inequality completes the proof of (\ref{ConstructionAssertion1})
because $N= m(n-p+1)$.

Finally, we must verify $P\nmid A$.
Indeed assuming the contrary, then $P$ also divides $B$. Because $\deg_Y
A,\deg_Y B \le q-1$ we have $A=B=0$, a contradiction to
$AY^m-B\not=0$. 
\end{proof}

\section{Multiplicity Estimates}
\label{sec:zerobounds}

We need some  facts about function fields which we recall here for
the reader's convenience. We refer to
 Chevalley's book  \cite{IntroAlgFunctions} for proofs.

For a field $F$ we write $F^\times = F \ssm\{0\}$. 
Suppose $F$ contains an algebraically closed subfield $L$ and
 that
there exists an element $t\in F$ that is transcendental over $L$ 
such that $F$ is a finite field extension of $L(t)$. Then $F$ is a function field
over $L$. We define $\places{F}$ to be the set of the
maximal ideals of all the proper valuation rings of $F$ containing
$L$.
This set is the function field analogue of $\places{K}$
for a number field $K$.
Observe that its elements, the places of $F$, have degree $1$ since $L$ is
algebraically closed. 
We will identify an element of $\places{F}$ with the valuation function it
 induces.
Hence an element of $\places{F}$ is a
surjective map $v:F\rightarrow \IZ\cup\{\infty\}$ such that for all
$a,b\in F$ we have 
 $v(ab)=v(a)+v(b)$  and
 $v(a+b)\ge \min \{v(a),v(b)\}$, $v(a)=\infty$ if and
only if $a=0$, and
$v(a)=0$ if $a\in L^\times$; 
we use the convention $\infty + x = x+\infty = \infty$
and $\min \{\infty,x\}=\min\{x,\infty\}=x$ for all $x\in\IZ\cup\{\infty\}$.

If $a\in F^\times$, then $v(a)=0$ for all but finitely many $v\in \places{F}$ and 
\begin{equation}
\nonumber
\sum_{v\in \places{F}} v(a) = 0.
\end{equation}
Furthermore, if $a\in F\ssm L$ then 
\begin{equation}
\nonumber
\sum_{v\in \places{F}} \max\{0,v(a)\} = [F:L(a)].
\end{equation}

Suppose $P\in\IQbar[X,Y]$ is irreducible
 and let 
$F$  denote the  field of fractions of the domain
 $\IQbar[X,Y]/(P)$.
Then $F$ is a function field over $L=\IQbar$ .
By abuse of notation we shall consider
polynomials in $\IQbar[X,Y]$ as elements of $F$ via the quotient map. Note that
any polynomial in $\IQbar[X,Y]$ that is not divisible by $P$ maps to $F^\times$.

Let $\pi=(x,y)\in\IQbar^2$ with $P(\pi)=0$
such that $\frac{\partial P}{\partial X}, \frac{\partial P}{\partial Y}$
do not both
vanish at $\pi$, then we call $\pi$ a \emph{regular zero} of $P$.
Let us assume for the moment that
$\frac{\partial P}{\partial Y}(\pi) \not =0$,
then there exists a unique $v_\pi \in \places{F}$  with $v_{\pi}(X-x)=1$ 
and $v_{\pi}(Y-y)\ge 1$. Moreover, there exists
$E$ in $\IQbar[[T]]$, the ring of formal power series with
coefficients in $\IQbar$, 
such that $E(0)=0$ and $P(x+T,y+E)=0$. 
For any $A\in\IQbar[X,Y]$ not divisible by $P$ we have 
\begin{equation}
\nonumber
\ord A(x+T,y+E) = v_\pi (A)
\end{equation}
where $\ord$ is the standard valuation on $\IQbar[[T]]$. 
Therefore $v_\pi(A)\ge 1$ if and only if $A(x,y)=0$.
If
$\frac{\partial P}{\partial X}(\pi)\not=0$ then these properties
 hold with the roles of $X$ and $Y$ reversed. 

Let $A\in\IQbar[X,Y]$, we define
\begin{equation}
\nonumber
D(A) = \frac{\partial P}{\partial Y}\frac{\partial A}{\partial X}
-\frac{\partial P}{\partial X}\frac{\partial A}{\partial Y}
\in\IQbar[X,Y].
\end{equation}
We also set $D^0(A)=A$ and inductively $D^s(A) = D(D^{s-1}(A))$ for
all positive integers $s$. A formal verification yields
$D(P)=0$ and $D(AB) = D(A)B+A D(B)$ for all  $B\in \IQbar[X,Y]$. Thus we have Leibniz's
rule
\begin{equation}
\label{eq:leibniz}
D^s(AB) = \sum_{k=0}^s 
{s \choose k} D^k(A) D^{s-k}(B)
\quad\text{and}\quad
D^s(PA) = P D^s(A)\text{ if } s\ge 0. 
\end{equation}

\begin{lemma}
\label{DsBoundLemma}
Let $K$ be a number field and $P\in K[X,Y]$ with $p=\deg_X P \ge 1$ and
 $q=\deg_Y P \ge 1$.
Furthermore, assume
 $A\in K[X,Y]$ 
such that $\deg_X A \le n$, $\deg_Y A \le q-1$. 
Then for any non-negative $s\in\IZ$ we have
\begin{alignat}1
\label{QuasiEquivDerivBound}
\deg_X D^s(A) \le n+(p-1)s\quad\text{and}\quad \deg_Y D^s(A) \le (q-1)(s+1).
\end{alignat}
Moreover, if $r=\max\{p,q\}$ and $v\in\places{K}$ then
\begin{alignat}1
\label{DerivBound2}
\quad |D^s (A)|_v \le \delta_v(2(p+1)(q+1)r(n+rs))^s |P|_v^s |A|_v.
\end{alignat}
\end{lemma}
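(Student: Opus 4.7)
The plan is to prove both bounds by induction on $s \ge 0$, with the base case $s=0$ immediate from the hypotheses on $A$. Write $A_s = D^s(A)$, so that $A_{s+1} = \frac{\partial P}{\partial Y}\frac{\partial A_s}{\partial X} - \frac{\partial P}{\partial X}\frac{\partial A_s}{\partial Y}$.

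For the degree bound (\ref{QuasiEquivDerivBound}), I would simply read off the $X$- and $Y$-degrees of each of the two summands using $\deg_X \frac{\partial P}{\partial X} \le p-1$, $\deg_X \frac{\partial P}{\partial Y} \le p$, $\deg_Y \frac{\partial P}{\partial X} \le q$, $\deg_Y \frac{\partial P}{\partial Y} \le q-1$ together with the inductive bounds on $\deg A_s$. A short calculation shows that both summands have $X$-degree at most $n+(p-1)(s+1)$ and $Y$-degree at most $(q-1)(s+2)$, yielding the claim at level $s+1$.

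For the size bound (\ref{DerivBound2}), I would fix $v\in\places{K}$ and apply Lemma \ref{AuxLemma}(i) first to the outer sum (contributing a factor $\delta_v(2)$) and then to each of the two products inside (contributing at most $\delta_v((p+1)(q+1))$ via the second inequality of Lemma \ref{AuxLemma}(i), since the degree-minima are dominated by the partial derivatives of $P$). The routine estimate $|\partial F/\partial Z|_v \le \delta_v(\deg_Z F)|F|_v$ then contributes a further $\delta_v(r)$ from the derivatives of $P$ (with $r=\max\{p,q\}$) and $\delta_v(n+r(s+1))$ from those of $A_s$, using the degree bound just established. Collecting these factors yields
\begin{equation*}
|A_{s+1}|_v \le \delta_v\bigl(2(p+1)(q+1)r(n+r(s+1))\bigr)\,|P|_v\,|A_s|_v,
\end{equation*}
and since $n+rs \le n+r(s+1)$, the $\delta_v$-factor appearing in the inductive estimate for $|A_s|_v$ is dominated by the one above, so multiplication closes the induction.

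The only real obstacle is the combinatorial bookkeeping: the two summands naively give asymmetric factors of the form $(p+1)q^2\,\deg_X A_s$ and $p^2(q+1)\,\deg_Y A_s$, and one needs the crude inequalities $(p+1)q^2,\, p^2(q+1)\le (p+1)(q+1)r$ together with $\max\{\deg_X A_s,\deg_Y A_s\}\le n+r(s+1)$ to collapse both into the single symmetric factor $(p+1)(q+1)r(n+r(s+1))$ that appears in (\ref{DerivBound2}).
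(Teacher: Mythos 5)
Your proof is correct and follows essentially the same route as the paper: induction on $s$, with Lemma \ref{AuxLemma}(i) handling the sum and the two products, the standard bound $|\partial F/\partial Z|_v\le\delta_v(\deg_Z F)|F|_v$ supplying the factors $\delta_v(r)$ and $\delta_v(n+r(s+1))$, and monotonicity of $\delta_v$ closing the induction. The only cosmetic difference is that the paper bundles the two summands into a single maximum and steps from $s-1$ to $s$, whereas you track the two (asymmetric) factors separately before collapsing them to $2(p+1)(q+1)r(n+r(s+1))$.
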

\begin{proof}
We note $\deg_X D(A)\le \deg_X(A) + p-1$ and so the first inequality in
(\ref{QuasiEquivDerivBound}) follows by induction on $s$. 
The second inequality is proved
similarly. 

We now show (\ref{DerivBound2}) by induction on $s$. 
The case $s=0$ being
trivial we may assume $s\ge 1$ and also $D^s(A)\not=0$. 
 For brevity set $|\cdot|=|\cdot|_v$.
We apply Lemma \ref{AuxLemma}(i) to deduce
\begin{equation}
\nonumber
|D^s(A)|\le \delta_v(2(p+1)(q+1))
\max\left\{\left|\frac{\partial P}{\partial Y}\right| 
\left|\frac{\partial D^{s-1}(A)}{\partial X}\right|,
\left|\frac{\partial P}{\partial X}\right| 
\left|\frac{\partial D^{s-1}(A)}{\partial Y}\right|\right\}.
\end{equation}
 By bounding the partial derivatives of the 
polynomials in the usual manner we get 
\begin{equation}
\nonumber
|D^s(A)|\le 
\delta_v(2(p+1)(q+1)r \max\{\deg_X D^{s-1}(A),\deg_Y D^{s-1}(A)\}) 
|P| |D^{s-1}(A)|.
\end{equation}
The inequalities in (\ref{QuasiEquivDerivBound}) imply
\begin{equation}
\nonumber
|D^s(A)| \le \delta_v(2(p+1)(q+1)r\max\{n+(p-1)(s-1),(q-1)s\}) |P| |D^{s-1}(A)|.
\end{equation}
The expressions inside the  maximum are bounded from above by $n+rs$. Applying the
induction hypothesis completes the proof.
\end{proof}

\begin{lemma}
\label{HopitalLemma}
Suppose $P\in\IQbar[X,Y]$ is irreducible,
let  $\pi\in\IQbar^2$ be a regular zero of $P$ and let
 $v=v_\pi\in \places{F}$ be the
valuation described above.
If $A\in\IQbar[X,Y]$ is not divisible by $P$ and
$A(\pi)=0$, then $D(A)$ is not divisible by $P$ and
\begin{equation}
\nonumber
v (D(A)) = v (A) - 1.
\end{equation}
\end{lemma}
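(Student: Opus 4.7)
The plan is to exploit the formal power series parametrization of $P$ near $\pi$ to reduce the computation of $v_\pi(D(A))$ to a straightforward application of the chain rule and the fact that differentiation in $\IQbar[[T]]$ drops the order by exactly one in characteristic zero.

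Without loss of generality assume $\partial P/\partial Y(\pi)\neq 0$; the case $\partial P/\partial X(\pi)\neq 0$ is handled symmetrically after swapping the roles of $X$ and $Y$. Let $E\in\IQbar[[T]]$ be the series with $E(0)=0$ and $P(x+T,y+E)=0$ described in the text, so that $v_\pi(G) = \ord G(x+T,y+E)$ for every $G\in\IQbar[X,Y]$ not divisible by $P$. Differentiating the identity $P(x+T,y+E(T))=0$ with respect to $T$ yields
\begin{equation*}
\frac{\partial P}{\partial X}(x+T,y+E) + \frac{\partial P}{\partial Y}(x+T,y+E)\cdot E'(T) = 0,
\end{equation*}
so $E'(T) = -(\partial P/\partial X)/(\partial P/\partial Y)$ evaluated at $(x+T,y+E)$. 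This makes sense as an element of $\IQbar[[T]]$ because $(\partial P/\partial Y)(x+T,y+E)$ has nonzero constant term $(\partial P/\partial Y)(\pi)\neq 0$ and so is a unit.

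Next, for any $A\in\IQbar[X,Y]$, set $f(T)=A(x+T,y+E(T))$. The chain rule together with the formula for $E'$ gives, after multiplying by $(\partial P/\partial Y)(x+T,y+E)$,
\begin{equation*}
\frac{\partial P}{\partial Y}(x+T,y+E)\cdot f'(T)
= \Bigl(\frac{\partial P}{\partial Y}\frac{\partial A}{\partial X} - \frac{\partial P}{\partial X}\frac{\partial A}{\partial Y}\Bigr)(x+T,y+E) = D(A)(x+T,y+E).
\end{equation*}

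Now assume $A$ is not divisible by $P$ and $A(\pi)=0$, so $v_\pi(A)=\ord f(T)\ge 1$. Since we are in characteristic zero, $\ord f'(T) = \ord f(T)-1 = v_\pi(A)-1 < \infty$, and in particular $f'\neq 0$. The factor $(\partial P/\partial Y)(x+T,y+E)$ has order $0$, hence the displayed identity shows that $D(A)(x+T,y+E)$ is a nonzero element of $\IQbar[[T]]$ with $\ord = v_\pi(A)-1$. In particular $D(A)(x+T,y+E)\neq 0$ forces $D(A)\notin P\cdot\IQbar[X,Y]$, and reading off the order yields $v_\pi(D(A)) = v_\pi(A)-1$, as required. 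The only slightly delicate point is the case $\partial P/\partial Y(\pi)=0$ (but $\partial P/\partial X(\pi)\neq 0$), where one parametrizes by $Y$ instead of $X$, writes $X-x$ as a power series in $Y-y$, and runs the same argument; the sign and structure of $D$ are symmetric enough that the identical conclusion follows.
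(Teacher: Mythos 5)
Your proposal is correct and follows essentially the same route as the paper: implicit differentiation of $P(x+T,y+E)=0$ to relate $E'$ to the partial derivatives of $P$, then the chain rule applied to $f(T)=A(x+T,y+E)$ to exhibit $D(A)(x+T,y+E)$ as $f'(T)$ times the unit $(\partial P/\partial Y)(x+T,y+E)$, from which the order drop by exactly one follows since $\ord f\ge 1$ in characteristic zero. The only cosmetic difference is that you solve explicitly for $E'$ whereas the paper keeps the relation implicit, but the argument is the same.
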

\begin{proof}
We assume $\frac{\partial P}{\partial Y}(\pi)\not = 0$, the case
$\frac{\partial P}{\partial X}(\pi)\not = 0$ is similar. 
Say $\pi=(x,y)$. 
There exists $E\in T\IQbar[[T]]$ such that $P(x+T,y+E)=0$ and 
$v(A) = \ord A(x+T,y+E)\ge 1$. By the chain rule we have
\begin{equation*}
0 = \frac{d}{dT} P(x+T,y+E) = \frac{\partial P}{\partial X}(x+T,y+E) +
 \frac{dE}{dT}\frac{\partial P}{\partial Y}(x+T,y+E).
\end{equation*}
We use this  and the definition of $D$  to obtain
\begin{alignat*}1
&\ord D(A)(x+T,y+E) \\
&= \ord \left(\left( \frac{\partial P}{\partial
  Y}\frac{\partial A}{\partial X} - \frac{\partial P}{\partial
  X}\frac{\partial A}{\partial Y}\right)(x+T,y+E)\right) 
\\ \nonumber & = \ord \frac{\partial P}{\partial Y}(x+T,y+E) 
+ \ord \left( \frac{\partial A}{\partial X}(x+T,y+E) +
\frac{dE}{dT}\frac{\partial A}{\partial Y}(x+T,y+E)\right).
\end{alignat*}
By our assumption we have
$\ord \frac{\partial P}{\partial Y}(x+T,y+E) = 0$
which we insert into the equality above and 
use the chain rule again as well as $A(x,y)=0$ to get
\begin{equation}
\nonumber
\ord D(A)(x+T,y+E)  = \ord \frac{d}{dT} A(x+T,y+E)
= \ord A(x+T,y+E) - 1.
\end{equation}
Hence $v(D(A)) = v(A)-1$. In particular, $P$ does not divide $D(A)$. 
\end{proof}

We now prove a multiplicity estimate which will be useful later on.

\begin{lemma}
\label{ZeroBoundLemma}
Let $A,B,P,m,p,q,$ and $t$ be as in Lemma \ref{ConstructLemma1} with
$t\ge 1$. 
Furthermore, assume $P$ is 
irreducible and $\deg P=p+q$. If $\pi\in\IQbar^2$ is
a regular zero of $P$,  there exists an integer $s$ with 
$0 \le s \le t + pq - p -q$
such that $D^s (A)(\pi)\not =0$ 
and $D^k(A)(\pi)=0$ for all $0\le k < s$.
\end{lemma}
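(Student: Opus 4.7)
The plan is to reduce the assertion to the single inequality $v_\pi(A)\le t+pq-p-q$, and then prove it by working in the function field $F=\mathrm{Frac}(\IQbar[X,Y]/(P))$. By Lemma~\ref{ConstructLemma1}, $P\nmid A$, so $A\in F^\times$. Applying Lemma~\ref{HopitalLemma} iteratively, whenever $D^k(A)(\pi)=0$ the lemma gives $P\nmid D^{k+1}(A)$ and $v_\pi(D^{k+1}(A))=v_\pi(D^k(A))-1$. A straightforward induction shows $v_\pi(D^k(A))=v_\pi(A)-k$ for $0\le k\le v_\pi(A)$, so the smallest $s\ge 0$ with $D^s(A)(\pi)\ne 0$ is precisely $s=v_\pi(A)$, and it suffices to establish the bound on $v_\pi(A)$.

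The relation $AY^m-B=PQ$ in $\IQbar[X,Y]$ yields $AY^m=B$ in $F$, whence
\[
v(A)+mv(Y)=v(B)\qquad\text{for every }v\in\places{F}.
\]
I control $B$ globally through a bi-degree pole count: at each $v\in\places{F}$ the triangle inequality together with $\deg_X B\le n$ and $\deg_Y B\le q-1$ gives $v(B)\ge n\min\{0,v(X)\}+(q-1)\min\{0,v(Y)\}$, so the pole divisor of $B$ has total degree at most $n\sum_v\max\{0,-v(X)\}+(q-1)\sum_v\max\{0,-v(Y)\}$. Using the formula $\sum_v\max\{0,v(a)\}=[F:\IQbar(a)]$ recalled above, together with $[F:\IQbar(X)]=q$ and $[F:\IQbar(Y)]=p$ (both from irreducibility of $P$), this yields $\sum_v\max\{0,v(B)\}\le nq+(q-1)p$, since $\mathrm{div}(B)$ has degree zero.

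On the other hand, at every place $v$ arising from an affine point with $v(Y)>0$, the fact that $A$ is a polynomial forces $v(A)\ge 0$, so $v(B)\ge mv(Y)>0$; summing such contributions and using $\sum_v\max\{0,v(Y)\}=p$ these places account collectively for at least $mp$ of $\sum_v\max\{0,v(B)\}$. Isolating the $v_\pi$ term and analyzing the two cases $v_\pi(Y)=0$ and $v_\pi(Y)\ge 1$, the summand $mv_\pi(Y)$ cancels cleanly between $v_\pi(A)=v_\pi(B)-mv_\pi(Y)$ and the lower bound on the other affine contributions, giving
\[
v_\pi(A)\le nq+(q-1)p-mp=t+pq-p-q.
\]
The main obstacle is the careful bookkeeping at places at infinity, in particular at mixed places where $v(X)<0$ and $v(Y)>0$: here one has to tighten the bi-degree pole bound on $B$ by exploiting $v(B)=v(A)+mv(Y)\ge n\min\{0,v(X)\}+mv(Y)$, so that the lost part of the $mp$-accounting is compensated. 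The hypothesis $\deg P=p+q$ plays its role by forcing $P_{p,q}\ne 0$, which excludes a corner place at $(\infty,\infty)$ and ensures that the pole-degree identities used for $X$ and $Y$ are satisfied without slack.
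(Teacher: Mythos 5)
Your route is correct in substance but genuinely different from the paper's, and it is worth spelling out the contrast because the two run into different bookkeeping issues.

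The paper bounds the divisor of $A$, not of $B$. Writing $A=BY^{-m}$ in $F$, the paper's key observation is that at places $v'$ with $v'(Y)<0$ (hence $v'(X)\ge 0$ by the corner-exclusion argument that uses $\deg P=p+q$), the relation $v'(A)=v'(B)-mv'(Y)\ge (q-1-m)v'(Y)\ge -v'(Y)>0$ forces $A$ to vanish to high order. Thus in $\sum_{v'}\max\{0,-v'(A)\}$ the only contributing places have $v'(X)<0$ and $v'(Y)\ge 0$, where the single ultrametric estimate $v'(A)\ge n v'(X)$ closes the argument with no case split at all. Your version replaces this by a pole/zero count for $B$, and the analogous observation is that at affine places with $v(Y)>0$ the factor $Y^m$ forces $B$ to vanish to order at least $m v(Y)$.

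The difference is that your version does not avoid the mixed places $S_2=\{v:v(X)<0,\,v(Y)>0\}$, which can certainly be nonempty (e.g.\ $P=X^2Y^3-1$). Your phrase ``tighten the bi-degree pole bound on $B$ by exploiting $v(B)\ge n\min\{0,v(X)\}+mv(Y)$'' is not by itself enough: writing $\epsilon=\sum_{S_2}|v(X)|$ and $\delta=\sum_{S_2}v(Y)$, tightening the pole bound gains only $n\epsilon-\sum_{S_2}\max\{0,n|v(X)|-mv(Y)\}$, which is strictly less than the $m\delta$ you lose in the ``$mp$-accounting'' whenever $mv(Y)>n|v(X)|$ at some $v\in S_2$; this does happen, since the hypothesis only guarantees $nq-mp=t-q$, which can be negative. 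The missing half of the cancellation is that at those same $S_2$-places with $mv(Y)>n|v(X)|$ the estimate $v(B)\ge nv(X)+mv(Y)>0$ also contributes to the \emph{zero} count: adding $\sum_{S_2}\max\{0,mv(Y)-n|v(X)|\}$ to the lower bound for $\sum_v\max\{0,v(B)\}$ and subtracting the matching $\sum_{S_2}\max\{0,n|v(X)|-mv(Y)\}$ on the pole side telescopes via $\max\{0,a\}-\max\{0,-a\}=a$ to exactly $n\epsilon-m\delta$, after which the $\epsilon$ and $\delta$ cancel and you recover $v_\pi(A)\le nq+(q-1)p-mp=t+pq-p-q$. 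Once you make this explicit, your argument does close. One further observation, which you may wish to verify, is that your version of the argument appears to carry through even at a corner place with $v(X)<0$ and $v(Y)<0$ (such a place contributes to both the $n\sum\max\{0,-v(X)\}$ and $(q-1)\sum\max\{0,-v(Y)\}$ parts of the pole bound, which is already accounted for), so that, unlike the paper's proof, you do not seem to invoke $\deg P=p+q$ in the way you claim. This makes the role you assign to that hypothesis in your last sentence misleading.
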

\begin{proof}
Let $F$ be as above Lemma \ref{DsBoundLemma}. For brevity set $v=v_\pi$. 
Clearly $X,Y\in F\ssm \IQbar$ since $p$ and $q$ are both positive; also 
$v(X),v(Y)\ge 0$. 
Furthermore, $A\not=0$ in $F$ by Lemma
\ref{ConstructLemma1}. 

We first claim
that for any $v'\in \places{F}$ at least one of the two $v'(X),v'(Y)$ is
non-negative. Indeed we argue by contradiction so 
let us assume $v'(X)<0$ and $v'(Y) <0$. Then for any integers $i,j$ with $0\le
i \le p$, $0\le j\le q$ and $i+j<p+q$ we have
\begin{equation}
\label{ZBLIneq1}
iv'(X)+jv'(Y) > pv'(X)+qv'(Y).
\end{equation}
Now by hypothesis $P=\alpha X^p Y^q + \tilde P$ with $\alpha\not=0$ and $\deg
\tilde P < p+q$. We apply the ultrametric inequality and (\ref{ZBLIneq1}) to get
\begin{equation}
\nonumber
pv'(X)+qv'(Y) \ge 
\min_{\substack{0\le i\le p,0\le j \le q \\ i+j<p+q}} 
\{ iv'(X)+jv'(Y)\}
> p v'(X) + q v'(Y),
\end{equation}
a contradiction. 

Now assume $v'\in \places{F}$ such that $v'(Y) < 0$. Then $v'(X) \ge 0$ by the
discussion above and 
\begin{equation}
\label{ZBLEq2}
v'(A) = v'( BY^{-m}) =  v'(B)-m v'(Y).
\end{equation}
Now $\deg_Y B \le q-1$  so  the ultrametric inequality
implies 
$v'(B) \ge (q-1) v'(Y)$.
We insert this last inequality into (\ref{ZBLEq2}) to find
\begin{equation}
\label{ZBLIneq3}
v'(A) \ge (q-1-m)v'(Y) \ge -v'(Y) > 0 
\end{equation}
because $q \le m$. Hence
\begin{alignat}1
\nonumber
\sum_{v'\in \places{F}} \max\{0,v'(A)\} &\ge \max\{0,v(A)\}
+ \sum_{\substack{v'\in \places{F} \\ v'(Y)<0}} \max\{0,v'(A)\} \\
\label{ZBLIneq4}
& \ge v(A)+ (m+1-q)\sum_{\substack{v'\in \places{F} \\ v'(Y) <0}}\max\{0,-v'(Y)\}
\end{alignat}
where the last inequality follows from (\ref{ZBLIneq3}).
Next we insert the equality 
\begin{equation*}
\sum_{\substack{v'\in \places{F} \\ v'(Y)<0}} \max\{0,v'(Y^{-1})\} =
[F:\IQbar(Y^{-1})] = [F:\IQbar(Y)]=p
\end{equation*}
 into (\ref{ZBLIneq4}) to find
\begin{equation}
\label{ZBLIneq5}
\sum_{v'\in \places{F}} \max\{0,v'(A)\} \ge v(A) + (m+1-q)p > 0.
\end{equation}
In particular $A\notin\IQbar$.

We continue by bounding the left-hand side of (\ref{ZBLIneq5}) from above.
If $v'\in \places{F}$ with $v'(X)\ge 0$ and $v'(Y)\ge 0$ then $v'(A)\ge 0$ because
$A$ is a polynomial in $X$ and $Y$. Hence
\begin{alignat}1
\label{ZBLIneq6}
  \begin{aligned}    
\sum_{v'\in \places{F}} \max\{0,-v'(A)\} \le &\sum_{v'\in \places{F},v'(X) <
0}\max\{0,-v'(A)\}+ 
\\ 
&\sum_{v'\in \places{F}, v'(Y) < 0} \max\{0,-v'(A)\}.
  \end{aligned}
\end{alignat}
Actually equality holds above because at most one $v'(X)$, $v'(Y)$ can be
negative, but this is not important here.
Around (\ref{ZBLIneq3}) we showed that if $v'(Y)<0$ then $v'(A)>0$, hence the second term on the
right-hand side of (\ref{ZBLIneq6}) is zero. 
Now recall that $\deg_X A\le n$; if $v'(X)<0$ then $v'(Y)\ge 0$ and 
 the ultrametric inequality 
leads us to $v'(A) \ge n v'(X)$. If we insert this inequality into
(\ref{ZBLIneq6}) we get
\begin{alignat}1
\label{ZBLIneq7}
\sum_{v'\in \places{F}} \max\{0,-v'(A)\} &\le
n 
\sum_{v'\in \places{F}, v'(X)<0}
\max\{0,-v'(X)\}
\\ \nonumber
&=n[F:\IQbar(X^{-1})]= n[F:\IQbar(X)] = nq.
\end{alignat}

The left-hand sides of (\ref{ZBLIneq5}) and (\ref{ZBLIneq7}) are both
equal to $[F:\IQbar(A)]$, so we get
\begin{equation}
\nonumber
v(A) \le nq + (q-m-1)p = t+pq-p-q.
\end{equation}
If we set $s=v(A)$, then Lemma \ref{HopitalLemma} and induction give
$v(D^k(A))=v(A)-k$ for $0\le k \le s$. Hence $D^s(A)(\pi)\not= 0$ 
and $D^k(A)(\pi)=0$ for $0\le k <s$.
\end{proof}

\section{Completion of Proof}

\begin{lemma}
\label{regLemma}
Say $\kappa > 0$ and $\lambda >0$ are real numbers with
$\kappa\lambda\ge 2$. 
Let $P\in\IQbar[X,Y]$ be irreducible with $p=\deg_X P\ge 1$, $q=\deg_Y
P\ge 1$,
and $\deg P = p+q$. 
For a regular zero $(x,y)\in\IQbar^2$  of $P$ we define
\begin{equation}
\label{def:kh}
  k = \max\left\{\frac{\height{x}}{q},\frac{\height{y}}{p}\right\} 
\quad\text{and}\quad
h=\log\left((p+1)(q+1)\right)+\hproj{P}.
\end{equation}
If $k\ge \lambda^2 h$,  then
\begin{alignat*}1
h(y) \le &\frac pq h(x) 
+ p\left(\kappa + \frac{1}{\lambda}+\frac{4}{\kappa}\right) 
(hk)^{1/2}
\\
&+\frac{1}{\kappa\lambda}\left( 
\frac{\log(16\kappa\lambda)}{8} + \frac{\log 3}{24}+
9\log(p+1) + \log\left(1+\frac{\kappa\lambda}{2}\right)\right).
\end{alignat*}
\end{lemma}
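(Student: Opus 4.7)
The plan is to execute the strategy outlined in the introduction. First, choose positive integers $m$ and $n$ with $m$ roughly of size $\kappa q(k/h)^{1/2}$ and $n$ chosen so that $t = q(n+1)-mp$ lies in $\{1,\dots,q\}$ (for instance $n=\lceil(mp-q)/q\rceil$). The hypotheses $k\ge\lambda^2h$ and $\kappa\lambda\ge 2$ ensure that $m\ge 2q+1$ and $n\ge p$, so Lemma \ref{ConstructLemma1} furnishes polynomials $A,B\in\IQbar[X,Y]$ with $P\nmid A$ and $AY^m-B=PQ$ for some nonzero $Q\in\IQbar[X,Y]$, together with the stated bounds on the partial degrees and on $\hproj{A,B}$.

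Next, use Lemma \ref{ZeroBoundLemma} to produce an integer $s$ with $0\le s\le t+pq-p-q$ such that $D^s(A)(\pi)\ne 0$ while $D^k(A)(\pi)=0$ for $0\le k<s$. Applying $D^s$ to $AY^m-B=PQ$ and invoking (\ref{eq:leibniz}) gives $D^s(AY^m)-D^s(B)=P\,D^s(Q)$, which vanishes at $\pi$. Expanding $D^s(AY^m)=\sum_{k=0}^s\binom{s}{k}D^k(A)\,D^{s-k}(Y^m)$ and evaluating at $\pi$, every term with $k<s$ drops by choice of $s$, leaving just $D^s(A)(\pi)\,y^m$. Therefore
\[
y^m\;=\;\frac{D^s(B)(\pi)}{D^s(A)(\pi)},
\]
a clean identity from which $h(y)$ can be read off in terms of $h(x)$.

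To conclude, I would estimate $m\,h(y)=h(y^m)$ by Lemma \ref{AuxLemma}(ii), using the degree bounds $\deg_X D^s(A),\deg_X D^s(B)\le n+(p-1)s$ and $\deg_Y D^s(A),\deg_Y D^s(B)\le (q-1)(s+1)$ from Lemma \ref{DsBoundLemma}, the projective height bound (\ref{DerivBound2}) for the derivatives, and the control of $\hproj{A,B}$ from Lemma \ref{ConstructLemma1}. Rearranging produces
\[
h(y)\le\frac{n+(p-1)s}{m-(q-1)(s+1)}\,h(x)+\frac{\hproj{D^s A,D^s B}+\mathcal{E}}{m-(q-1)(s+1)},
\]
where $\mathcal{E}$ collects logarithmic terms. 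Since $n/m$ is within $1/m$ of $p/q$ and $(q-1)(s+1)$ is small compared with $m$, the leading coefficient is close to $p/q$. The technical heart of the proof, and the main obstacle I foresee, is the bookkeeping required to identify each residual error with one of the three pieces $p\kappa(hk)^{1/2}$, $p(hk)^{1/2}/\lambda$, and $4p(hk)^{1/2}/\kappa$ in the claimed bound: the first arises from the Siegel-type height $\hproj{A,B}/m$ with our choice of $m$; the second and third come from the ratio corrections $p/q-n/m$ and $(q-1)(s+1)/m$ respectively, with the hypothesis $k\ge\lambda^2h$ converting surplus factors $(h/k)^{1/2}$ into powers of $1/\lambda$. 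The small remainder of order $1/(\kappa\lambda)$ then emerges from the logarithmic terms of (\ref{DerivBound2}) and Lemma \ref{AuxLemma}(ii) once divided by $m$.
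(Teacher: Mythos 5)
Your strategy is exactly the paper's: construct $A,B$ with Lemma \ref{ConstructLemma1}, use Lemma \ref{ZeroBoundLemma} to find the first $s$ with $D^s(A)(\pi)\neq 0$, derive $y^m=D^s(B)(\pi)/D^s(A)(\pi)$ via Leibniz's rule, and estimate with Lemmas \ref{AuxLemma}(ii) and \ref{DsBoundLemma}. However, your parameter choices do not deliver the claimed constants, and since the lemma is fully quantitative this is a genuine gap rather than a cosmetic one. You take $t=q(n+1)-mp\in\{1,\dots,q\}$ and $m$ of size $\kappa q(k/h)^{1/2}$. With $t\le q$ one still only gets $s\le t+pq-p-q\le p(q-1)$, i.e.\ $s$ of order $pq$, so the terms $(p-1)s\,\height{x}$ and $(q-1)(s+1)\height{y}$ are of order $p^2q^2k$; divided by your $m$ this contributes roughly $(p^2q/\kappa)(hk)^{1/2}$, a factor of order $pq$ worse than the allowed $4p\kappa^{-1}(hk)^{1/2}$. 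Enlarging $m$ to cure this breaks the Siegel term instead: $\hproj{A,B}/m\approx\frac{n-p+1}{t}\,h\approx\frac{mp}{qt}\,h$, which with $t\le q$ and $m\gtrsim\kappa pq^2(k/h)^{1/2}$ is at least of order $\kappa p^2(hk)^{1/2}$, not $\kappa p(hk)^{1/2}$. The two constraints force $t$ to be of order $pq$; the paper arranges this exactly by taking $m=q\lceil\kappa pq(k/h)^{1/2}\rceil$ (so that $q\mid m$) and $n=mp/q+p-1$, giving $t=pq$, $\frac{n-p+1}{t}=\frac{m}{q^2}$, and all error terms of the right size. Note also that your suggested $n=\lceil(mp-q)/q\rceil$ yields $t=0$ whenever $q\mid mp$, violating the hypothesis $t\ge1$ of Lemma \ref{ConstructLemma1}.

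Two smaller points. The numerical estimates needed for the $O(1/(\kappa\lambda))$ remainder use $q\ge 2$ in several places (e.g.\ $q^{-1}\log(q+1)\le(\log 3)/2$ and $q(s+1)\le 2(p+1)(q+1)\max\{p,q\}$), so the case $q=1$ must be treated separately — the paper dispatches it directly with Lemma \ref{AuxLemma}(iii), using that an irreducible $P$ with $\deg_Y P=1$ is not divisible by any $X-\alpha$. Finally, your rearrangement dividing by $m-(q-1)(s+1)$ requires checking that this quantity is positive and close to $m$; it is cleaner to bound $(q-1)(s+1)\height{y}\le(q-1)(s+1)pk$ first and then divide by $m$, as the paper does.
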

\begin{proof}
We first handle the case $q=1$ using Lemma \ref{AuxLemma}(iii).
Indeed, as $P$ is irreducible, it cannot be divisible by a polynomial
that is linear in $X$. 
As $h\ge \hproj{P}$ and $h\ge \log(p+1)$, we find
\begin{alignat}1
\label{eq:qequals1bound}
 h(y)\le p h(x) +  \log(p+1)+ \hproj{P}
\le ph(x) + h \le p h(x) + \lambda^{-1} ({hk})^{1/2}
\end{alignat}
where we also used  $h\le \lambda^{-2}k$. 
Observe that $\lambda^{-1} \le p\kappa$ because $\kappa\lambda \ge
1$. The
 bound (\ref{eq:qequals1bound}) is better than our claim. Hence we now
assume $q\ge 2$.

As $h>0$ we may define
\begin{equation}
\label{definemn}
m= q \left\lceil \kappa pq \left(\frac kh\right)^{1/2}\right\rceil \quad\text{and}\quad n=m\frac pq + p -1
\end{equation}
where $\lceil z\rceil$ is the least integer greater or equal to
$z\in\IR$. 
So $m$ and $n$ are integers and 
\begin{equation}
\label{eq:mtlb}
   m \ge \kappa pq^2 \left(\frac kh\right)^{1/2} \ge \kappa \lambda
  pq^2
\end{equation}
and
\begin{equation}
\label{eq:mtub}
   m \le  q\left(\kappa pq \left(\frac
  kh\right)^{1/2}+1\right)
= \kappa pq^2 \left(\frac
  kh\right)^{1/2} + q. 
\end{equation}
The lower bound for $m$ implies $m> 2q$ as
$\kappa\lambda> 1$ by hypothesis and since $q\ge 2$. Therefore, $m\ge
2q+1$ and $n\ge p$. 

Let $t=q(n+1)-mp$ be as in
Lemma \ref{ConstructLemma1}, then $t=pq\ge 1$. 
So  the said lemma provides $A,B\in\IQbar[X,Y]$ as therein.
Because of Lemma \ref{ZeroBoundLemma}
there exists an integer $s$ with 
\begin{equation*}
 0\le s \le t+pq-p-q \le 2pq -1 
\end{equation*}
 such that 
$D^s(A)(x,y)\not = 0$ and $D^k(A)(x,y)=0$ for all $0\le k < s$.
We apply Leibniz's rule (\ref{eq:leibniz}) to $D^s(AY^m-B)$ and the use
the fact that $AY^m-B$ is divisible by $P$ to conclude
\begin{equation}
\nonumber
y^m = \frac{D^s(B)(x,y)}{D^s(A)(x,y)}.
\end{equation}
 Lemma \ref{AuxLemma}(ii) and $h(y^m)=mh(y)$ gives
\begin{alignat}1
\label{hybound1}
m\height{y} &\le \hproj{D^s(A),D^s(B)}
+ \max\{\deg_X D^s(A),\deg_X D^s(B)\}\height{x} \\ \nonumber
 &\quad+ \max\{\deg_Y D^s(A),\deg_Y D^s(B)\}\height{y}  \\ \nonumber
  &\quad+\log\max\{(\deg_X D^s(A)+1)(\deg_Y D^s(A)+1),
\\ \nonumber
&\phantom{{}\quad+\log\max\{}(\deg_X D^s(B)+1)(\deg_Y D^s(B)+1)\}.
\end{alignat}
We use Lemma \ref{DsBoundLemma} applied to $A$ and $B$ to deduce
\begin{alignat}1
\nonumber
\max\{\deg_X D^s(A), \deg_X D^s(B)\} &\le n+(p-1)s,\\
\nonumber
\max\{\deg_Y D^s(A), \deg_Y D^s(B)\} &\le (q-1)(s + 1)
\end{alignat}
and
\begin{equation*}
\hproj{D^s(A),D^s(B)} \le \hproj{A,B} +s\hproj{P}+ s\log\bigl(2(p+1)(q+1)r(n+rs)\bigr)
\end{equation*}
with $r=\max\{p,q\}$; the last line
follows from summing up the local bounds in (\ref{DerivBound2}).
We insert these bounds in (\ref{hybound1}) to see
\begin{alignat}1
\nonumber
m\height{y} &\le \hproj{A,B} + s\hproj{P} +
(n+(p-1)s)\height{x} + (q-1)(s+1)\height{y}
\\ \nonumber
& 
+ s\log \bigl(2(p+1)(q+1)r(n+rs)\bigr) 
+\log\bigl((1+n+(p-1)s)q(s+1)\bigr).
\end{alignat}
Next we use the bound given for $\hproj{A,B}$ in Lemma \ref{ConstructLemma1} 
and recall $t=pq$ to get
\begin{alignat}1
\label{hybound2}
  m\height{y} &\le \frac {m(n-p+1)}{pq} \left(\log\left((p+1)(q+1)\right)+\hproj{P}\right) +\frac{\log(2nq)}{2}
\\  \nonumber
& \quad + s\hproj{P}+ (n+(p-1)s)\height{x} 
 + (q-1)(s+1)\height{y}\\
& \quad +s\log \bigl(2(p+1)(q+1)r(n+rs)\bigr)  
 +\log\bigl((1+n+(p-1)s)q(s+1)\bigr).
\nonumber
\end{alignat}
We use $n \le mp/q + p$, which
follows from (\ref{definemn}),  and find
\begin{alignat*}1
  (n+(p-1)s)h(x) &+ (q-1)(s+1)h(y) \\
&\le
m\frac pq h(x)+
  (p+(p-1)s)q k + (q-1)(s+1)pk
\\
&\le m\frac pq h(x)+\bigl(pq + (p-1)sq + (q-1)(s+1)p\bigr)k\\
&\le m\frac pq h(x)+4p^2q^2k
\end{alignat*}
using  the definition of $k$ and since $s\le 2pq-1$. 
We insert this bound into  (\ref{hybound2}), divide by $m$, and use the definition of $h$ 
to get
\begin{alignat*}1
\height{y} &
\le  \frac pq h(x) +\left(\frac {n-p+1}{pq} +\frac{s}{m}\right)h + \frac{4p^2q^2}{m}k 
 +
\frac{\log(2nq)}{2m} \\
&+ \frac{2pq-1}{m} \log \bigl(2(p+1)(q+1)r(n+rs)\bigr) 
+\frac 1m \log\bigl((1+n+(p-1)s)q(s+1)\bigr)
\end{alignat*}

Suppose for the moment $s\ge 1$, then 
 $1+n+(p-1)s\le n+rs$. We also 
have  $q(s+1)\le 2pq^2 \le 2(p+1)(q+1)r$. 
Therefore, $(1+n+(p-1)s)q(s+1) \le 2(p+1)(q+1)r(n+rs)$
and this inequality also holds for $s=0$ as $n\ge 1$. 
We find
\begin{alignat}1
  \begin{aligned}
       \label{hybound3}
\height{y} &
\le  \frac pq h(x) +\left(\frac {n-p+1}{pq}+\frac sm\right) h + \frac{4p^2q^2}{m}k 
 +
\frac{\log(2nq)}{2m} \\
&\quad+ \frac{2pq}{m} \log \bigl(2(p+1)(q+1)r(n+rs)\bigr) 
  \end{aligned}
\end{alignat}

We now continue by bounding each term on the right-hand side of inequality
(\ref{hybound3}). 

The first term $\frac pq h(x)$ is the main contribution to $h(y)$. 

To bound the second term we recall
our choice  (\ref{definemn}) which 
yields 
$(n-p+1)/(pq)= m/q^2$.
Observe that
\begin{equation*}
  \frac {n-p+1}{pq}+\frac sm\le
\frac{m}{q^2} 
+\frac{2}{\kappa\lambda q}
\le \frac{m}{q^2} + \frac{1}{q}
\end{equation*}
by (\ref{eq:mtlb}), $s\le 2pq,$ and $\kappa\lambda\ge 2$.
So the second term on the right of (\ref{hybound3}) satisfies
\begin{alignat*}1
\left(\frac {n-p+1}{pq}+\frac sm\right)h \le \left(\frac {m}{q^2} + \frac
1q\right)h  
\le \left(\kappa p \left(\frac kh\right)^{1/2} + \frac 2q\right)h
\le \kappa p (kh)^{1/2} + h
\end{alignat*}
because of (\ref{eq:mtub}) and $q\ge 2$. Using
$h\le \lambda^{-2}k$ we find
\begin{equation}
\label{term2bound} 
  \left(\frac{n-p+1}{pq}+\frac sm\right)h \le p\left(\kappa + \frac{1}{p\lambda}
  \right)(kh)^{1/2}
\le p\left(\kappa + \frac{1}{\lambda} \right)(kh)^{1/2}.
\end{equation}

The third term in (\ref{hybound3}) can be bounded from above using 
the first inequality in (\ref{eq:mtlb})  as follows
\begin{equation}
\label{term3bound}
\frac{4p^2q^2}{m} k \le \frac{4p}{\kappa} (hk)^{1/2}. 
\end{equation}

We move on to the fourth term and recall $n\le  mp/q + p$.
As $m\ge \kappa\lambda pq^2\ge
4\kappa\lambda p\ge 4\kappa\lambda, m\ge q$, since $z\mapsto z^{-1}\log(4z)$ is decreasing on
$[1,\infty)$, and because $p^{-1}\log p\le (\log 3)/3$ we find
\begin{alignat}1
\label{term4bound}
\begin{aligned}
\frac{\log(2nq)}{2m} &\le \frac{\log(2mp + 2pq)}{2m}
\le \frac{\log(4mp)}{2m}
\le \frac{\log(16\kappa\lambda)}{8\kappa\lambda} + \frac{\log
  p}{8\kappa\lambda p} \\
&\le \frac{\log(16\kappa\lambda)}{8\kappa\lambda} + \frac{\log
  3}{24\kappa\lambda}. 
\end{aligned}
\end{alignat}

For the fifth term we use $ n\le mp/q +
p, s\le 2pq-1,$ and $q\ge 2$ to bound
\begin{alignat*}1
n+rs \le m\frac pq + p + r(2pq-1)  \le m \frac p2 + 2pqr
\le 2pqr \left(1+\frac{m}{4pq}\right)
\end{alignat*}
and thus
\begin{alignat*}1
  \frac{2pq}{m} \log(n+rs) &\le \frac{2\log(2pqr)}{\kappa\lambda q} +  \frac {2pq}{m}
  \log\left(1+  \frac{m}{4pq}\right) \\
&\le \frac{2\log (2pqr)}{\kappa\lambda q} + \frac{1}{\kappa\lambda}
\log\left(1+\frac{\kappa\lambda}{2}\right)
\end{alignat*}
as $m/(pq)\ge \kappa\lambda q\ge 2\kappa\lambda$ and since $z\mapsto z^{-1}\log(1+z/2)$ is
decreasing on $(0,\infty)$. We deduce 
\begin{alignat}1
\label{eq:3pqmbound}
  \frac{2pq}{m}\log(2(p+1)(q+1)r(n+rs)) 
\le \frac{2}{\kappa\lambda}\Bigl(&\frac{\log(2(p+1)r)}{q}
+\frac{\log(q+1)}{q} 
\\ \nonumber
&+
\frac{\log(2pqr)}{q} 
+ \frac 12 \log\Bigl(1+\frac{\kappa\lambda}{2}\Bigr)\Bigr). 
\end{alignat}

Below we will use the estimates $q^{-1}\log(q+1)\le (\log 3)/2$ and
$q^{-1}\log (2q) \le \log 2$, which hold since $q\ge 2$.
 
If $q> p$, then $r=q$ and so $q^{-1}\log (2(p+1)r)\le 0.5\log(p+1) + \log
2$. We have $q^{-1}\log (2q^2)\le \log \sqrt 8$. Hence the right-hand side
of (\ref{eq:3pqmbound}) is at most
\begin{alignat}1
\nonumber
  &\frac{2}{\kappa\lambda}\left(\log(p+1) + \log 2 + \frac 12
  \log 3 + \log \sqrt 8 + \frac 12
  \log\left(1+\frac{\kappa\lambda}{2}\right)\right)\\
\label{eq:3pqmbound2}
&\qquad \le
\frac{1}{\kappa\lambda}\left(9\log(p+1) 
+
\log\left(1+\frac{\kappa\lambda}{2}\right)\right).
\end{alignat}

Now suppose $p\ge q$, then $r=p\ge 2$. 
We bound $0.5 \log (2(p+1)p) \le 0.5\log(2)+\log(p+1) \le
1.5\log(p+1)$, 
$q^{-1}\log(q+1)\le 0.5 \log(p+1)$, and
$q^{-1} \log (2pqr) \le  0.5 \log(p^2)+\log 2 \le 2\log(p+1)$. 
In this case, 
 the right-hand side of (\ref{eq:3pqmbound}) is at
most
\begin{alignat*}1
\frac{2}{\kappa\lambda}\left(4\log(p+1) + 
\frac 12 \log\left(1+\frac{\kappa\lambda}{2}\right)\right)
= \frac{1}{\kappa\lambda}\left(8\log(p+1) + 
\log\left(1+\frac{\kappa\lambda}{2}\right)\right).
\end{alignat*}

In both cases we find that the fifth term is bounded by
(\ref{eq:3pqmbound2}). 
We insert this  upper bound as well as (\ref{term2bound}), (\ref{term3bound}),
(\ref{term4bound})  into (\ref{hybound3}) to find 
\begin{alignat*}1
\height{y} &\le \frac pq \height{x}
+ p\left(\kappa+\frac{1}{\lambda}  +\frac{4}{\kappa}\right) ({kh})^{1/2}
+\frac{1}{8\kappa\lambda}\left(\log(16\kappa\lambda) + \frac{\log 3}{3}\right)\\
&\qquad +
\frac{1}{\kappa\lambda}\left(9 \log(p+1) + \log\left(1+\frac{\kappa\lambda}{2}\right)\right),
\end{alignat*}
as required.
\end{proof}

\begin{lemma}
\label{SingLemma}
Let $P\in \IQbar[X,Y]$ be irreducible with $p=\deg_X P\ge 1$ and
$q=\deg_Y P\ge 1$.
If $(x,y)\in\IQbar^2$ with $P(x,y)=0$ is not a regular zero of $P$ then
\begin{equation}
\label{eq:singbound}
\max\left\{\frac{h(x)}{q},\frac{\height{y}}{p}\right\}
 \le  2\hproj{P}+4\log((p+1)(q+1)).
\end{equation}
\end{lemma}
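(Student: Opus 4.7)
Since $(x,y)$ is not a regular zero, both partial derivatives vanish: $\frac{\partial P}{\partial X}(x,y) = \frac{\partial P}{\partial Y}(x,y) = 0$ in addition to $P(x,y) = 0$. The plan is to use classical elimination to produce a non-zero univariate polynomial vanishing at $x$ (and one vanishing at $y$), with controlled height and degree, and then appeal to a standard root-of-polynomial estimate.

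For the $x$-coordinate, I consider the $Y$-resultant $R(X) := \mathrm{Res}_Y(P, \partial P/\partial Y) \in \IQbar[X]$. Since $P$ is irreducible in $\IQbar[X,Y]$ with $\deg_Y P = q \ge 1 > \deg_Y \partial P/\partial Y$, Gauss's lemma shows that $P$ is prime in $\IQbar(X)[Y]$ and does not divide $\partial P/\partial Y$, so $R$ is non-zero. The Sylvester identity $UP + V\,\partial P/\partial Y = R$ with $U,V \in \IQbar[X,Y]$, evaluated at $(x,y)$, yields $R(x) = 0$. Analogously, $S(Y) := \mathrm{Res}_X(P, \partial P/\partial X) \in \IQbar[Y]$ is non-zero and satisfies $S(y) = 0$.

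Next I bound $\deg R$ and $\hproj{R}$. Writing $R$ as a $(2q-1)\times(2q-1)$ Sylvester determinant whose entries are (integer multiples of) coefficients of $P$ viewed in $\IQbar[X]$, each of degree at most $p$, one reads off $\deg R \le (2q-1)p < 2pq$. A place-by-place estimate, using Lemma \ref{AuxLemma}(i) inductively along the Leibniz expansion of the determinant together with $|\partial P/\partial Y|_v \le \delta_v(q)|P|_v$, yields $|R|_v \le \delta_v(C_{p,q})|P|_v^{2q-1}$ for an explicit positive integer $C_{p,q}$ polynomial in $p$ and $q$. Summing via (\ref{def:projheight}) gives $\hproj{R} \le (2q-1)\hproj{P} + c\cdot q\log((p+1)(q+1))$ for some absolute constant $c$, and the same approach bounds $\hproj{S} \le (2p-1)\hproj{P} + c\cdot p\log((p+1)(q+1))$.

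To finish I use the standard bound that for a non-zero $R\in\IQbar[X]$ with $R(x)=0$, $\height{x}\le \hproj{R}+\log\deg R$. This follows by normalizing the leading coefficient of $R$ to $1$ and applying the ultrametric (finite $v$) or triangle (infinite $v$) inequality at each place to bound $\max\{1,|x|_v\}$, then summing via (\ref{def:weilheight}); it is the analogue, with $X$ and $Y$ swapped, of the remark immediately following Lemma \ref{AuxLemma}. Combining, $\height{x}/q$ is bounded by $(2-1/q)\hproj{P}$ plus log terms, and the symmetric computation bounds $\height{y}/p$. The main obstacle is arranging the numerical constants so that all logarithmic contributions collectively fit inside $4\log((p+1)(q+1))$; this is routine but somewhat tedious bookkeeping using Lemma \ref{AuxLemma}(i), rather than any new idea.
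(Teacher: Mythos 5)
Your proposal is correct and follows essentially the same route as the paper: use the vanishing of both partial derivatives at a non-regular zero, form the resultant of $P$ with a partial derivative (which is non-zero since $P$ is irreducible), bound the degree and projective height of the resultant via the Sylvester matrix and Lemma \ref{AuxLemma}(i), and then apply the root-height estimate of Lemma \ref{AuxLemma}(iii). The only cosmetic difference is that you run the argument for both $\mathrm{Res}_Y(P,\partial P/\partial Y)$ and $\mathrm{Res}_X(P,\partial P/\partial X)$, whereas the paper reduces to one case by the symmetry $h(y)/p \ge h(x)/q$; both yield the same bound, and your "bookkeeping" step does work out (the resultant in $X$ is a $(2q-1)\times(2q-1)$ determinant, giving $\hproj{R}\le 2q\hproj{P}+4q\log(q+1)+2q\log(p+1)$ and $\log(\deg R)/q\le\log(2p)\le 2\log(p+1)$, recovering the constant $4$).
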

\begin{proof} 
By symmetry we may assume $h(y)/p  \ge h(x)/q$. 
Let $D\in\IQbar[Y]$ be the resultant of the two polynomials
  $P,{\partial P}/{\partial X}\in\IQbar(Y)[X]$
(cf. Chapter IV, \S 8 \cite{LangAlgebra}). Then $D\not=0$ because $P$
  is irreducible in $\IQbar(Y)[X]$. The resultant $D$ is the determinant of a
  $(2p-1)\times (2p-1)$ matrix whose entries, denoted here by $m_{ij}$, are polynomials in $Y$ with
  degrees bounded by $q$. We find
 $\deg D \le (2p-1)q\le 2pq$ and, using the structure of the resultant
  matrix, that $D$ is a sum of at most
$(p+1)^{p-1}p^p \le (p+1)^{2p-1}$ products of the $m_{ij}$. If $K$ is a 
number field containing the coefficients of the $m_{ij}$ and $v\in \places{K}$, then
by Lemma \ref{AuxLemma}(i) we find
\begin{equation}
\label{Dbound}
|D|_v \le \delta_v{((p+1)^{2p-1})} \max_\sigma \{|m_{1,\sigma(1)}\cdots m_{2p-1,\sigma(2p-1)}|_v\}
\end{equation}
where $\sigma$ runs over all permutations of the first $2p-1$ positive
integers. The second bound of Lemma \ref{AuxLemma}(i) applied to the
univariate $m_{ij}$ yields 
\begin{equation}
\nonumber
|m_{1,\sigma(1)}\cdots m_{2p-1,\sigma(2p-1)}|_v \le
  \delta_v((q+1)^{2p-2})|m_{1,\sigma}|_v\cdots |m_{2p-1,\sigma(2p-1)}|_v. 
\end{equation}
This 
  inequality and $|m_{ij}|_v\le \delta_v(p)|P|_v$ 
inserted into (\ref{Dbound}) gives
\begin{equation}
\nonumber
|D|_v \le \delta_v{((p+1)^{2p-1} (q+1)^{2p-2} p^{2p-1})}|P|_v^{2p-1}
\le \delta_v((p+1)^{4p} (q+1)^{2p})|P|_v^{2p-1}.
\end{equation}
We take the  sum over all places of $K$ to find
 $\hproj{D} \le 2p\hproj{P}+4p\log (p+1)+2p\log(q+1)$. 

Now if $(x,y)\in\IQbar^2$ with $P(x,y)=0$ is not a regular zero of $P$, 
then $D(y)=0$.
By Lemma \ref{AuxLemma}(iii) we can bound $\height{y} \le \hproj{D}
  + \log\deg D \le \hproj{D}+\log(2pq)$. The bound (\ref{eq:singbound})
follows from this inequality together with the bound for $\hproj{D}$
and since $p^{-1}\log (2pq) \le \log (2q)\le 2\log(q+1)$.
\end{proof}

\begin{lemma}
\label{lem:degreepplusq}
Say $\kappa > 0$ and $\lambda >0$ are real numbers  with $\kappa\lambda\ge 4$. 
Let $P\in\IQbar[X,Y]$ be irreducible with $p=\deg_X P\ge 1$, $q=\deg_Y
P\ge 1$,
and $\deg P = p+q$. 
For $(x,y)\in\IQbar^2$ with $P(x,y)=0$ we define
$k$ and $h$ as in (\ref{def:kh}). 
If $k\ge \lambda^2 h$,  then
\begin{alignat}1 
  \begin{aligned}    
\label{eq:hxqhyp1}
\left| \frac{h(x)}{q}-\frac{h(y)}{p}\right|
&\le 
 \left(\kappa  + \frac{1}{\lambda}+\frac{4}{\kappa}\right) 
(hk)^{1/2}
+ \frac{9}{8\kappa\lambda} \log(363 \kappa\lambda). 
  \end{aligned}
\end{alignat}
\end{lemma}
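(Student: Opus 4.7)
The plan is to split into two cases according to whether $(x,y)$ is a regular zero of $P$, handling the regular case via Lemma \ref{regLemma} applied in both directions, and the singular case directly via Lemma \ref{SingLemma}.

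In the regular case, I would first apply Lemma \ref{regLemma} to $P$ and divide by $p$ to bound $h(y)/p - h(x)/q$ from above. The polynomial $P(Y,X)$ is also irreducible, has partial degrees $q$ and $p$, and the same projective height as $P$; moreover $k$ and $h$ from (\ref{def:kh}) are invariant under the swap $x\leftrightarrow y$, $p\leftrightarrow q$. Reapplying Lemma \ref{regLemma} to $P(Y,X)$ and dividing by $q$ yields the symmetric bound on $h(x)/q - h(y)/p$, so combining the two gives the desired absolute value estimate. The leading term $(\kappa+1/\lambda+4/\kappa)(hk)^{1/2}$ already matches the target, and what remains is to control the additive constant.

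To simplify the constant, I would use two elementary reductions: (a) $\log(n+1)/n \le \log 2$ for every integer $n\ge 1$ (attained at $n=1$), which absorbs both $9\log(p+1)/p$ and $9\log(q+1)/q$ into the uniform bound $9\log 2$; and (b) $\log(1+\kappa\lambda/2)\le \log(\kappa\lambda)$, valid since $\kappa\lambda\ge 4\ge 2$. Using also $1/\min\{p,q\}\le 1$ to discard the remaining $p,q$-dependence, the constant consolidates to
\begin{equation*}
\frac{1}{8\kappa\lambda}\left(9\log(\kappa\lambda) + 76\log 2 + \tfrac{1}{3}\log 3\right),
\end{equation*}
and comparison with $\frac{9}{8\kappa\lambda}\log(363\kappa\lambda) = \frac{1}{8\kappa\lambda}(9\log(\kappa\lambda)+9\log 363)$ reduces the matter to the purely numerical inequality $76\log 2 + \tfrac{1}{3}\log 3 \le 9\log 363$, which I would verify by direct computation. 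The constant $363$ is evidently chosen so that this inequality barely holds (the two sides agree to roughly four decimals).

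The singular case is quick. Lemma \ref{SingLemma} gives $\max\{h(x)/q, h(y)/p\} \le 2\hproj{P} + 4\log((p+1)(q+1)) \le 4h$, so in particular $|h(x)/q - h(y)/p| \le 4h$. The hypothesis $k\ge\lambda^2 h$ rewrites as $h\le (hk)^{1/2}/\lambda$, giving $|h(x)/q-h(y)/p|\le (4/\lambda)(hk)^{1/2}$. The hypothesis $\kappa\lambda\ge 4$ yields $4/\lambda\le\kappa\le\kappa+1/\lambda+4/\kappa$, so the singular estimate is already subsumed by the leading term, and the non-negative additive constant $\frac{9}{8\kappa\lambda}\log(363\kappa\lambda)$ (positive since $363\kappa\lambda>1$) only helps. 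The main obstacle I anticipate is the numerical bookkeeping in the regular case: the reductions $\log(1+\kappa\lambda/2)\to\log(\kappa\lambda)$ and $\log(p+1)/p\to\log 2$ must be made without losing slack, since the final scalar inequality $76\log 2+\tfrac{1}{3}\log 3\le 9\log 363$ is essentially sharp.
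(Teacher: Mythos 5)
Your proposal is correct and follows essentially the same route as the paper: Lemma \ref{regLemma} for regular zeros (the paper invokes it once after a WLOG symmetry reduction rather than applying it to both $P$ and $P(Y,X)$, which is the same thing), Lemma \ref{SingLemma} for singular zeros absorbed into the leading term via $4/\lambda\le\kappa$, and the same numerical consolidation — your inequality $76\log 2+\tfrac13\log 3\le 9\log 363$ is exactly the paper's $2^{76/9}3^{1/27}\le 363$, which does hold with about $0.05\%$ to spare.
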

\begin{proof}
By symmetry we may suppose $h(y)/p\ge h(x)/q$. 
Let us assume that $(x,y)$ is a regular zero of $P$. 
We divide the bound in 
 Lemma \ref{regLemma} by $p$, use $p^{-1}\log(p+1) \le \log 2$,
 $1+\kappa\lambda/2 \le \kappa \lambda$, and 
\begin{equation*}
\frac{\log(16\kappa\lambda)}{8p} + \frac{\log 3}{24p} 
+9 \log 2 + \frac{\log(\kappa\lambda)}{p}\le 
\frac 98 \log\left(2^{76/9}  3^{1/27} \kappa\lambda\right)
\le \frac 98 \log\left(363 \kappa\lambda\right)
\end{equation*}
to conclude (\ref{eq:hxqhyp1}).

The upper bound given by Lemma 
\ref{SingLemma} is at most $4h\le 4\lambda^{-1} (hk)^{1/2}$.
If $(x,y)$ is not a regular zero. 
This is also an upper bound for $|h(x)/q-h(y)/p|$ so the current lemma
 follows as $4\lambda^{-1}\le \kappa$. 
\end{proof}

\begin{lemma}
\label{RationalTransformation}
Let $P\in \IQbar[X,Y]$ be 
irreducible with $p=\deg_X P \ge  1$ and $q=\deg_Y P $. Then
there is a root of unity $\xi$ such that the polynomial 
$\tilde P = X^p P(X^{-1}+\xi,Y)$
has total degree $p+q$, is irreducible in $\IQbar[X,Y]$, and
satisfies
\begin{equation}
\nonumber
\deg_X \tilde P = p,\quad \deg_Y \tilde P = q,\quad 
\hproj{\tilde P} \le p\log 2+\hproj{P}.
\end{equation}
\end{lemma}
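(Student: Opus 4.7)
The plan is to write $\tilde P$ out explicitly as an element of $\IQbar[X,Y]$, read off its partial and total degrees and estimate its height directly from the expansion, then handle irreducibility via the inverse rational transformation $X \mapsto 1/(Z-\xi)$.

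Writing $P = \sum_{j=0}^q P_j(X) Y^j$ with $P_j \in \IQbar[X]$ and $P_q \not= 0$, the first step is to pick the root of unity $\xi$ so that $P_q(\xi) \not= 0$; this is possible since $P_q$ has at most $p$ roots in $\IQbar$ while the group of roots of unity in $\IQbar$ is infinite. Writing $a_{ij}$ for the coefficient of $X^i Y^j$ in $P$, the identity
\[
\tilde P(X,Y) = \sum_{i,j} a_{ij} X^{p-i}(1+\xi X)^i Y^j
\]
shows $\tilde P \in \IQbar[X,Y]$ with $\deg_X \tilde P \le p$, $\deg_Y \tilde P \le q$, and $\deg \tilde P \le p+q$. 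The coefficient of $X^p Y^q$ in $\tilde P$ is $\sum_i a_{iq}\xi^i = P_q(\xi) \not= 0$, which upgrades all three inequalities to equalities.

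For irreducibility, I would use the inverse relation $P(Z,Y) = (Z-\xi)^p \tilde P(1/(Z-\xi),Y)$ obtained by substituting $X = 1/(Z-\xi)$. Suppose $\tilde P = QR$ in $\IQbar[X,Y]$ with $Q,R$ both non-constant, and set $a = \deg_X Q$, $b = \deg_X R$, so $a+b=p$. Defining $\hat Q(Z,Y) = (Z-\xi)^a Q(1/(Z-\xi),Y)$ and $\hat R(Z,Y) = (Z-\xi)^b R(1/(Z-\xi),Y)$ as elements of $\IQbar[Z,Y]$, one obtains $P(Z,Y) = \hat Q(Z,Y)\hat R(Z,Y)$. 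By irreducibility of $P$, one factor, say $\hat Q$, is a nonzero constant $c$, which forces $Q(X,Y) = cX^a$. But then $X^a$ divides $\tilde P$, whereas the explicit formula gives $\tilde P(0,Y) = \sum_j a_{pj}Y^j$, which is nonzero since $\deg_X P = p$. This forces $a=0$ and $Q$ constant, contradicting our assumption.

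The height bound is a direct computation from the expansion. The coefficient of $X^l Y^j$ in $\tilde P$ equals $\sum_{k=0}^l a_{p-l+k,j}\binom{p-l+k}{k}\xi^k$. Since $\xi$ is a root of unity, $|\xi|_v = 1$ at every place; at finite places one also has $|\binom{n}{k}|_v \le 1$, so the ultrametric inequality yields $|\tilde P|_v \le |P|_v$. At infinite places, the triangle inequality together with the hockey stick identity $\sum_{k=0}^l \binom{p-l+k}{k} = \binom{p+1}{l}$ gives $|\tilde P|_v \le |P|_v \max_{0\le l\le p}\binom{p+1}{l} \le 2^p |P|_v$, where the final bound $\binom{p+1}{\lfloor(p+1)/2\rfloor} \le 2^p$ for $p\ge 1$ follows from unimodality and symmetry of the binomial coefficients. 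Summing $d_v \log|\tilde P|_v$ over the places of a number field containing $\xi$ and the coefficients of $P$ produces $\hproj{\tilde P} \le p\log 2 + \hproj{P}$. The main obstacle is the irreducibility step, where a hypothetical factorization $\tilde P = QR$ must be translated back through the birational map $X \mapsto X^{-1}+\xi$ and a stray factor of $X^a$ has to be ruled out using the non-vanishing of $\tilde P(0,Y)$.
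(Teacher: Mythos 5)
Your proof is correct and follows essentially the same route as the paper: the same choice of $\xi$ with $P_q(\xi)\neq 0$, the same explicit coefficient formula, and the same hockey-stick/central-binomial estimate $\binom{p+1}{l}\le 2^p$ combined with triviality at the finite places. The only difference is that you spell out the irreducibility step (via the inverse substitution and the non-vanishing of $\tilde P(0,Y)$), which the paper dismisses as ``a direct computation''; your argument there is sound.
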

\begin{proof} We may write $P=\sum_j a_j Y^j$ with 
$a_j=\sum_i a_{ij}X^i\in \IQbar[X]$. By
  hypothesis we have $a_q\not=0$ and $a_q$ has degree at most $p$ as a
  polynomial in $X$. We  choose a root of unity $\xi$ such that
  $a_q(\xi)\not=0$. A direct computation
using the irreducibility of $P$ shows that $\tilde P$ is irreducible.
By construction $\deg_X \tilde P \le p$, $\deg_Y \tilde P \le q$ and
  therefore $\deg \tilde P \le p+q$. 
 
Say $0\le i\le p$ and $0\le j \le q$, then the
coefficient of $X^iY^j$ in $\tilde P$ equals
\begin{equation}
\label{CoeffXiYj}
\sum_{k=p-i}^p a_{kj} 
{k \choose p-i}
\xi^{i-p+k}.
\end{equation}
So if $i=p$ and $j=q$ we see that the coefficient of $X^pY^q$ is $a_q(\xi)\not=0$ and
conclude that $\deg_X \tilde P = p$, $\deg_Y \tilde P = q$, and
$\deg\tilde P = p+q$. 

Now say $K$ is a number field that contains $\xi$ and
all coefficients of $P$.
If $v\in\places{K}$, then by (\ref{CoeffXiYj})
and standard facts on binomial coefficients we get
\begin{alignat*}1
|\tilde P|_v &\le \max_{0\le i\le p}\delta_v
\left(\sum_{k=p-i}^p 
{k \choose p-i}
\right)|P|_v
= \max_{0\le i\le p}\delta_v\left(
{p+1 \choose p-i+1}
\right) |P|_v 
\\ &\le \delta_v(2^p)|P|_v.
\end{alignat*}
We sum over the locals bounds to complete the proof. 
\end{proof}

\begin{proof}[of Theorem \ref{QuasiequivThm}]
By symmetry, we may suppose $p= \deg_X P \le \deg_Y P=q$. 

First suppose $q=1$. Then $p=1$.
Clearly, $|h(x)-h(y)|\le \max\{h(x),h(y)\}$
and Lemma \ref{AuxLemma}(iii)  implies
$|h(x)-h(y)|\le \log(2)+ \hproj{P}$. So $|h(x)-h(y)|$ is bounded from
above by the geometric mean
\begin{equation*}
  (\log(2)+\hproj{P})^{1/2} \max\{h(x),h(y)\}^{1/2}
\end{equation*}
which is less than  the bound in the assertion. 
 
So we may assume $q\ge 2$, in particular $(p+1)(q+1) \ge 6$. 
We make the choice $\kappa = 2.25$ and  $\lambda = 4.98$.
Let $k = \max\{h(x)/q,h(y)/p\}$.

We let $\xi$ and $\tilde P$ be as in 
 Lemma \ref{RationalTransformation}.

Say $(x,y)$ is as in the hypothesis. If $x=\xi$, then $h(x)=0$ and $h(y)\le  \log((p+1)q) + \hproj{P}$ by Lemma
\ref{AuxLemma}(iii). 
Thus
\begin{equation*}
  \left|\frac{h(x)}{q}-\frac{h(y)}{p}\right| =  \frac{h(y)}{p}
\le \bigl(\log((p+1)q) + \hproj{P}\bigr)^{1/2}
\left(\frac{h(y)}{p}\right)^{1/2}. 
\end{equation*}
This is better than our claim. 

We now assume  $x\not=\xi$ and define $\tilde x = (x-\xi)^{-1}$. 
Hence $\tilde P(\tilde x,y)=0$. By basic height properties, cf.
Lemma 1.5.18 and Proposition 1.5.15 \cite{BG}, 
we find $h(\tilde x) = h(x-\xi)$ and 
$|h(\tilde x)-h(x)|\le \log 2$. Thus
\begin{equation}
\label{eq:hxtilde}
\left|  \frac{h(x)}{q}-\frac{h(y)}{p} \right|
\le \left|  \frac{h(\tilde x)}{q}-\frac{h(y)}{p} \right|+ \frac{\log
  2}{q}
\le \left|  \frac{h(\tilde x)}{q}-\frac{h(y)}{p} \right|+ \frac{\log 2}{2}.
\end{equation}

First, we suppose $k-(\log 2)/q\ge
\lambda^2\tilde h$ with $\tilde h=  \log\left((p+1)(q+1)\right)+ \hproj{\tilde P}$.
Then $\max\{h(\tilde x)/q,h(y)/p\} \ge \lambda^2\tilde h$.
 Lemma \ref{lem:degreepplusq} applied to $\tilde P$ and (\ref{eq:hxtilde}) imply
\begin{alignat*}1
  \left|\frac{h(x)}{q}-\frac{h(y)}{p}\right|
&\le 
 \left(\kappa+ \frac{1}{\lambda}+\frac{4}{\kappa} \right) 
{(\tilde h\tilde k)}^{1/2}
+ \frac{9}{8\kappa\lambda} \log\left(363\kappa\lambda\right)
+\frac{\log 2}{2} \\
&\le 4.229 ({\tilde h\tilde k})^{1/2}  + 1.181
\end{alignat*}
with our choice of $\kappa$ and $\lambda$
and with  $\tilde k  = \max\{h(\tilde x)/q,h(y)/p\}$.
Now 
\begin{equation}
\label{eq:htildeub}
  \tilde h \le \log\left((p+1)(q+1)\right)+ p\log 2+\hproj{P}
= \log\left(2^{p}(p+1)(q+1)\right) +\hproj{P}.
\end{equation}
We have $k\ge \lambda^2 \tilde h \ge \lambda^2\log 6 > 1$
and therefore
\begin{equation*}
  \tilde k\le   k + \frac{\log 2}{q}\le k+\frac{\log 2}{2}\le  k + \frac{\log 2}{2 \lambda^2
    \log 6} k \le 1.008k. 
\end{equation*}
 We conclude
$4.229 ({\tilde h\tilde k})^{1/2} + 1.181 \le 
4.246 (\log(2^{p}(p+1)(q+1)) +\hproj{P})^{1/2}  k^{1/2}+1.181$. 
Observe that $\log\left(2^{p}(p+1)(q+1)\right) \ge \log 12$. The
theorem follows in this case as $k\ge 1$ and
$4.246 + 1.181 (\log 12)^{-1/2} < 5$. 

Second, we must treat the case $k-(\log 2)/q< \lambda^2\tilde h$.
The bound (\ref{eq:htildeub}) continues to hold and
we find
\begin{alignat*}1
  \left|\frac{h(x)}{q}-\frac{h(y)}{p}\right|
&\le \max\left\{\frac{h(x)}{q},\frac{h(y)}{p}\right\} =k
\le \left( \frac{\log 2}{2} + \lambda^2 \tilde h\right)^{1/2}k^{1/2}
\\ 
&\le \left(\frac{\log 2}{2} + \lambda^2\log(2^{p}(p+1)(q+1))+\lambda^2\hproj{P}\right)^{1/2}
k^{1/2} \\
&\le \left(\frac{\log 2}{2\log 12}+\lambda^2\right)^{1/2}  \left( \log(2^{p}(p+1)(q+1))+\hproj{P}\right)^{1/2}
k^{1/2}
\end{alignat*}
where we used $\log(2^p(p+1)(q+1))\ge\log 12$ again. 
This is better than our claim as $(\log(2)/(2\log 12)+\lambda^2)^{1/2}<5$. 
\end{proof}

Corollary \ref{QuasiequivCor} is easy to prove using
Theorem \ref{QuasiequivThm}. We set $k = \max\{h(x)/q,h(y)/p\}$. 

Say $P(x,y)=0$ with $x,y\in\IQbar$ and $\height{y}> 2\frac
pq\height{x}$.
Then
\begin{equation*}
\frac{k}{2}\le \max\left\{\frac{h(x)}{q},\frac{h(y)}{2p}\right\} = \frac{h(y)}{2p}<  \frac{h(y)}{p}-\frac{h(x)}{q}.
\end{equation*}
By Theorem \ref{QuasiequivThm} we have
\begin{equation}
\nonumber
\frac{k}{2}<
5 (\log(2^{r}(p+1)(q+1))+\hproj{P})^{1/2}
k^{1/2}
\end{equation}
and so 
\begin{equation*}
  k^{1/2} <10  \left( \log(2^{r}(p+1)(q+1)) +
  \hproj{P}\right)^{1/2}.
\end{equation*}
This contradicts the bound in (\ref{QuasiequivCorHypo}).

\section{On a Theorem of Runge}
In this section we will prove Theorem \ref{RungeThm}.

Let $P=\sum_{i,j} p_{ij}X^i Y^j\in\IZ[X,Y]$ be irreducible in
$\IQbar[X,Y]$ with $d=\deg P=\deg_X P = \deg_Y P$.
Furthermore, since Theorem \ref{RungeThm} applies only to polynomials of degree
at least $2$, we assume $d\ge 2$.
We use $P_d = \sum_{i+j=d} p_{ij}X^i Y^j$ to denote the homogeneous part of
degree $d$. Finally, we can factor $P_d$ as  
$p_{d0} \prod_{s=1}^d (X-t_s Y)$ with $t_s\in\IQbar^\times$
for each $s\in \{1,\ldots,d\}$.

\begin{lemma}
\label{apps:ConstructR}
Set $t=t_s$ for some $s\in \{1,\ldots,d\}$.
We have $\height{t}\le \hproj{P}+\log d$. Furthermore, if $R_t(X,Z)=
P(X,t^{-1}(X-Z))\in\IQbar[X,Z]$, then $\deg_Z R = d$, $\deg_X R \le d-1$,
and $\hproj{R_t}\le d(\log(2d)+2\hproj{P})$. 
\end{lemma}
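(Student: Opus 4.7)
The plan is to verify the three assertions---the bound on $\height{t}$, the two degree statements, and the height bound for $R_t$---by direct computation, with the main difficulty lying in keeping the constant in the height bound small enough so that the leading factor is $d$ rather than $2d$.

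For the first assertion, I would observe that $t=t_s$ is a root of the univariate polynomial $A(W)=P_d(W,1)=\sum_{i+j=d}p_{ij}W^i$, which has degree exactly $d$ since $\deg_X P=d$ forces $p_{d0}\ne 0$. Applying Lemma \ref{AuxLemma}(iii) in the form noted immediately after its proof (a nonzero univariate polynomial evaluated at $x=0$) gives $\height{t}\le \hproj{A}+\log d$. The nonzero coefficients of $A$ are a subset of the nonzero coefficients of $P$, so $\hproj{A}\le\hproj{P}$ and the bound follows.

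For the degree statements I would write $R_t(X,Z)=\sum_{i,j}p_{ij}\,t^{-j}\,X^i(X-Z)^j$. Each $X^i(X-Z)^j$ has total degree $i+j\le d$, so both partial degrees are at most $d$. The coefficient of $Z^d$ in $R_t$ is $p_{0d}(-1)^d t^{-d}$, which is nonzero because $\deg P=\deg_Y P=d$ forces $p_{0d}\ne 0$; this gives $\deg_Z R_t=d$. The coefficient of $X^d$ comes only from the leading $X^{i+j}$ in each $X^i(X-Z)^j$ and equals $\sum_{i+j=d}p_{ij}\,t^{-j}=t^{-d}P_d(t,1)=0$ since $t$ is a root of $P_d(W,1)$; hence $\deg_X R_t\le d-1$.

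For the height bound I would expand the coefficient of $X^aZ^b$ in $R_t$ as $(-1)^b\sum_{j=b}^{a+b}p_{a+b-j,\,j}\,t^{-j}\binom{j}{b}$ and estimate place by place. At a finite place $v$, the ultrametric inequality together with $|\binom{j}{b}|_v\le 1$ gives an upper bound of $|P|_v\max\{1,|t^{-1}|_v\}^d$. At an infinite $v$, the triangle inequality followed by the hockey stick identity $\sum_{j=b}^{d}\binom{j}{b}=\binom{d+1}{b+1}\le 2^d$ gives an upper bound of $2^d|P|_v\max\{1,|t^{-1}|_v\}^d$. Combining into $|R_t|_v\le \delta_v(2^d)|P|_v\max\{1,|t^{-1}|_v\}^d$ and summing over all places yields
\[
\hproj{R_t}\le d\log 2+\hproj{P}+d\,\height{t}\le d\log(2d)+(d+1)\hproj{P}\le d(\log(2d)+2\hproj{P}),
\]
where the middle step uses the first assertion together with $\height{t^{-1}}=\height{t}$ and the final step uses $d+1\le 2d$ together with $\hproj{P}\ge 0$. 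The main obstacle is precisely to invoke the hockey stick identity at the infinite places; using the crude bound $\binom{j}{b}\le 2^d$ inside a sum of length $d+1$ would introduce an extra factor of $d+1$ and spoil the target constant.
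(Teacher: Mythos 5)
Your proposal is correct and follows essentially the same route as the paper: the bound for $\height{t}$ via Lemma \ref{AuxLemma}(iii) applied to $P_d(X,1)$, the degree statements from the same coefficient expansion, and the height bound via place-by-place estimates with the binomial sum $\binom{d+1}{b+1}\le 2^d$, which is exactly the computation the paper invokes by reference to the proof of Lemma \ref{RationalTransformation}, combined with $\height{t^{-1}}=\height{t}$ and $d+1\le 2d$.
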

\begin{proof}
The bound for $\height{t}$ follows from Lemma \ref{AuxLemma}(iii); 
indeed $t$ is a zero of $P_d(X,1) \in \IQbar[X]$ whose coefficients
are coefficients of $P$.

Let $i,j\ge 0$ be integers, the coefficient of $X^i Z^j$ in $R_t$ is
\begin{equation}
\nonumber
(-1)^j \sum_{k=0}^{d-j} 
{j+k \choose k}
p_{i-k,j+k}t^{-j-k}.
\end{equation}
Clearly, $\deg_Z R\le d$. 
The coefficient of $Z^d$ is nonzero and that of $X^i Z^j$ is zero
provided $i\ge d$. 
The lemma now follows from local inequalities as in the proof of
Lemma \ref{RationalTransformation} together with basic height properties.
\end{proof}

If $P(x,y)=0$ with $x,y$ algebraic numbers, 
then $R_t(x,x-ty)=0$. 

\begin{lemma}
\label{apps:hzSmall}
Let $P(x,y)=0$ with $x,y\in\IQ$ and
\begin{equation}
\label{apps:hxLowerBound}
h(x) \ge 100 d \left( \log\left((4d)^dd(d+1)\right) + 2d\hproj{P}\right).
\end{equation}
Set $t=t_s$ for
some $s\in \{1,\ldots,d\}$, 
there exists an embedding $\sigma:\IQ(t)\rightarrow\IC$
such that
\begin{equation}
\nonumber
\log\max\{1,|x-\sigma(t)y|\}\le \frac{d-1}{d} h(x) + 10.4 d
(\log(2d)+\hproj{P})^{1/2} h(x)^{1/2}.
\end{equation}
\end{lemma}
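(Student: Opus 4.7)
Set $z = x - t y$, so $z \in \IQ(t)$ and $R_t(x,z) = 0$ by construction of $R_t$. The plan is to apply the quasi-equivalence Theorem \ref{QuasiequivThm} to $R_t$ at $(x,z)$ in order to bound $h(z)$ by roughly $\tfrac{d-1}{d}h(x)$, and then to extract a single archimedean absolute value $|\sigma(z)| = |x - \sigma(t)y|$ by averaging over the embeddings $\sigma:\IQ(t)\to\IC$.

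First I would verify that $R_t$ satisfies the hypotheses of Theorem \ref{QuasiequivThm}. Irreducibility in $\IQbar[X,Z]$ follows because $(X,Z)\mapsto (X,t^{-1}(X-Z))$ is an invertible $\IQbar$-linear change of coordinates carrying $R_t$ to $P$. Lemma \ref{apps:ConstructR} gives $\deg_Z R_t = d$ and $p' := \deg_X R_t \le d-1$; moreover $p'\ge 1$, for otherwise $P(U,V)=R_t(U,U-tV)$ would depend only on $U-tV$ and hence factor into $d\ge 2$ linear forms, contradicting irreducibility.

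Next I would apply the quasi-equivalence machinery. Using Lemma \ref{apps:ConstructR}, $(p'+1)(d+1)\le d(d+1)$, and the elementary inequality $\log(2^{d-1}d(d+1))\le d\log(2d)$ valid for $d\ge 2$, the hypothesis (\ref{apps:hxLowerBound}) implies the hypothesis of Corollary \ref{QuasiequivCor} applied to $R_t$ at $(x,z)$. The corollary then gives $h(z)/p'\le 2h(x)/d$, so the maximum appearing in Theorem \ref{QuasiequivThm} is at most $2h(x)/d$. Applying the theorem,
\[
h(z) \le \frac{p'}{d}h(x) + c'\, p' \sqrt{2h(x)/d},
\]
where $c'^{2}\le 25\bigl(\log(2^{p'}(p'+1)(d+1)) + \hproj{R_t}\bigr)\le 50 d (\log(2d)+\hproj{P})$ after inserting the bound on $\hproj{R_t}$. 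Consequently $h(z)\le\tfrac{d-1}{d}h(x) + 10(d-1)(\log(2d)+\hproj{P})^{1/2}h(x)^{1/2}$.

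Finally I would descend from $h(z)$ to a single embedding. The archimedean part of the defining sum for $h(z)$ over the number field $\IQ(t)$ equals $[\IQ(t):\IQ]^{-1}\sum_\sigma\log\max\{1,|\sigma(z)|\}$, summed over all $[\IQ(t):\IQ]$ embeddings $\sigma:\IQ(t)\to\IC$. Dropping the non-negative finite-place contribution and pigeonholing, some $\sigma$ satisfies $\log\max\{1,|\sigma(z)|\}\le h(z)$; since $x,y\in\IQ$ we have $\sigma(z)=x-\sigma(t)y$, and $10(d-1)\le 10.4 d$ completes the bound. I expect the main obstacle to be the careful bookkeeping of explicit constants, in particular combining Lemma \ref{apps:ConstructR}'s projective height bound with the Siegel-type constant inside Theorem \ref{QuasiequivThm} to reach the target $10.4$.
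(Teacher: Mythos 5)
Your argument is essentially the paper's proof: pass to $R_t$, use Corollary \ref{QuasiequivCor} to control $\max\{h(x)/d,\,h(z)/\deg_X R_t\}$, apply Theorem \ref{QuasiequivThm}, and extract one embedding from the archimedean part of $h(z)$; your bookkeeping even lands at $10(d-1)$ rather than $10.4d$, and the side arguments (irreducibility of $R_t$, $\deg_X R_t\ge 1$, the pigeonhole over embeddings) are all sound. One intermediate inequality, however, does not close as stated: to verify the hypothesis of Corollary \ref{QuasiequivCor} you bound
\begin{equation*}
\log\bigl(2^{p'}(p'+1)(d+1)\bigr)+\hproj{R_t}\le d\log(2d)+d\log(2d)+2d\hproj{P}=2d\log(2d)+2d\hproj{P},
\end{equation*}
and you then need $2d\log(2d)\le \log\bigl((4d)^dd(d+1)\bigr)$, i.e.\ $d^{d-1}\le d+1$, which fails for every $d\ge 3$. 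The repair is to skip the lossy step $\log(2^{d-1}d(d+1))\le d\log(2d)$ at this point and compare directly:
\begin{equation*}
\log\bigl(2^{d-1}d(d+1)\bigr)+d\log(2d)+2d\hproj{P}
=\log\bigl(2^{2d-1}d^{d+1}(d+1)\bigr)+2d\hproj{P}
\le \log\bigl((4d)^dd(d+1)\bigr)+2d\hproj{P},
\end{equation*}
which is exactly the paper's inequality (\ref{eq:hRbound}). Your later use of $\log(2^{d-1}d(d+1))\le d\log(2d)$ to get $c'^2\le 50d(\log(2d)+\hproj{P})$ is fine, since there you only need an upper bound by $2d(\log(2d)+\hproj{P})$ and not a comparison with $\log((4d)^dd(d+1))$.
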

\begin{proof} 
Let $R=R_t$ be as in Lemma \ref{apps:ConstructR}, then
$R$ is irreducible. Now $\deg_X R \ge 1$, indeed if $\deg_X R =0$, 
then by construction $P$ has degree $1$, which contradicts our assumption
$d\ge 2$.
Let $z=x-ty$, so $R(x,z)=0$. 

We recall that $\deg_X R \le d-1$ and $\deg_Z R = d$. So
\begin{equation}
\label{eq:hRbound}
  \log(2^{\deg_X R }(\deg_X R + 1)(\deg_Z R + 1)) + \hproj{R}
\le \log\left((4d)^{d}d(d+1)\right) + 2d\hproj{P} 
\end{equation}
by Lemma \ref{apps:ConstructR}. Our hypothesis
(\ref{apps:hxLowerBound}) 
together with Corollary \ref{QuasiequivCor} yields
$h(z) \le 2 (\deg_X R) h(x)/d$ and hence
\begin{equation}
\label{eq:boundhxdhzdegX}
  \max\left\{\frac{h(x)}{d},\frac{h(z)}{\deg_X R} \right\}
\le 2\frac{h(x)}{d}.
\end{equation}

Next we apply Theorem \ref{QuasiequivThm}  to $R$ and use
(\ref{eq:boundhxdhzdegX}) as well as (\ref{eq:hRbound}) to find
\begin{equation*}
\frac{\height{z}}{\deg_X R}\le \frac{h(x)}{d}
+ \frac{5 \sqrt 2}{ d^{1/2}} \left(\log\left((4d)^{d}d(d+1)\right) + 2d\hproj{P}\right)^{1/2}h(x)^{1/2}.
\end{equation*}
We multiply with $\deg_X R \le d-1$ to obtain
\begin{equation}
\nonumber
\height{z}\le \frac{d-1}{d}\height{x}+10  d
\left(\frac{1}{2d} \log\left((4d)^{d}d(d+1)\right) + \hproj{P}\right)^{1/2}\height{x}^{1/2}. 
\end{equation}
As $d\ge 2$ we find
$(2d)^{-1} \log\left((4d)^dd(d+1)\right)\le 1.08 \log(2d)$ and this yields
\begin{equation}
\nonumber
\height{z}\le \frac{d-1}{d}\height{x}+10.4 d
\left(\log(2d) + \hproj{P}\right)^{1/2}\height{x}^{1/2}. 
\end{equation}

By the definition (\ref{def:weilheight}) of the height there exists
an embedding $\sigma:\IQ(t)\rightarrow\IC$ with 
$\log\max\{1,|x-\sigma(t)y|\}\le h(x-\sigma(t)y)=\height{z}$. This concludes the proof.
\end{proof}

We now prove Theorem \ref{RungeThm}:

Let $P$ be as in the hypothesis, so $d=\deg_X P \ge 2$ and there
 are non-conjugated zeros $t',t''\in\IQbar^\times$ 
 of $P_d(X,1)$.
Suppose $x,y\in\IZ$ with
$P(x,y)=0$. 
By symmetry we may assume $|y|\le |x|$ and  also $x\not=0$.

To prove this theorem, we may assume $h(x)=\log|x| \ge 100 d^4 h$ with $h =
\log(2d)+\hproj{P}$. 
Using $d\ge 2$ we find 
\begin{equation*}
  h 
\ge \frac{\log(4d)}{d^2} + \frac{\log(d(d+1))}{d^3} + \frac{2}{d^2} \hproj{P}
\end{equation*}
and this implies
\begin{equation*}
  h(x) \ge 100 d^4 h \ge 100 d\left(\log\left((4d)^dd(d+1)\right) + 2d\hproj{P}\right),
\end{equation*}
the hypothesis of Lemma \ref{apps:hzSmall}.

Let $\sigma',\sigma''$ be the embeddings 
given by Lemma \ref{apps:hzSmall} applied to $t',t''$ respectively,
then $\sigma'(t')\not=\sigma''(t'')$. For brevity we define
$\xi=x-\sigma'(t')y$ and $\eta=x-\sigma''(t'')y$; we eliminate $y$ to get
\begin{equation}
\nonumber
x= \frac{\xi \sigma''(t'')-\eta\sigma'(t')}{\sigma''(t'')-\sigma'(t')}.
\end{equation}
So
\begin{equation}
\label{apps:Boundx}
|x|\le 2\max\{|\xi|,|\eta|\}\max\{|\sigma'(t')|,|\sigma''(t'')|\}
|\sigma'(t')-\sigma''(t'')|^{-1}
\end{equation}
for the complex absolute value $|\cdot|$.

We will bound $|\xi|$ and $|\eta|$ using Lemma \ref{apps:hzSmall}, but first we
bound the remaining absolute values in (\ref{apps:Boundx}) using height
inequalities. If $\alpha\in\IQbar$, then
$\log\max\{1,|\alpha|_v\}\le [\IQ(\alpha):\IQ]\height{\alpha}$
for any $v\in\places{\IQ(\alpha)}$. This inequality follows immediately from
the definition of the height. Since for example 
$[\IQ(t'):\IQ]\le d$ and
$[\IQ(t',t''):\IQ]\le d^2$ we deduce from
(\ref{apps:Boundx}) that
\begin{equation}
\label{Boundlogabsx}
\log|x|\le \log 2 +  d \max\{\height{t'},\height{t''}\} + d^2
\height{\sigma'(t')-\sigma''(t'')}+\log\max\{1,|\xi|,|\eta|\}.
\end{equation}

Lemma \ref{apps:ConstructR} implies 
$\max\{\height{t'},\height{t''}\} \le \hproj{P}+\log d$.
Next we use the bounds for $|\xi|$ and $|\eta|$ from  Lemma \ref{apps:hzSmall}.
Together with (\ref{Boundlogabsx}) and
 standard height inequalities we have
\begin{alignat}1
\nonumber
\log|x|&\le \log 2+ d( \hproj{P}+\log d) +
 d^2(2\hproj{P}+2\log d + \log 2)
\\ \nonumber
& \qquad +\frac{d-1}{d}h(x)  + 10.4 d h^{1/2}h(x)^{1/2}
\\ \nonumber
& \le  \frac{d-1}{d}h(x)+3d^2 h + 10.4 dh^{1/2}h(x)^{1/2}.
\end{alignat}
As $h(x)=\log|x|$ we find
\begin{equation}
\nonumber
\log|x|\le 3d^3h + 10.4 d^{2}h^{1/2}(\log|x|)^{1/2}.
\end{equation}

If $B,C,T\ge 0$ with $T\le C+B\sqrt{T}$, then
\begin{equation*}
T \le \frac 14 \left(\left(B^2+4C\right)^{1/2} + B\right)^2. 
\end{equation*}
We apply this inequality with 
$B=10.4d^2 h^{1/2}$, $C=3d^3h$, and
$T=\log|x|$ to conclude 
that 
\begin{equation*}
  \log |x| \le \frac 14 \left (\left(10.4^2 + 12\right)^{1/2}d^2 h^{1/2}
+ 10.4d^2 h^{1/2}\right)^2 \le 
115 d^4 h.
\end{equation*}
 The upper bound for $\log|y|$ in (\ref{RungeThm}) follows
since $|y|\le |x|$.
\bibliographystyle{amsplain}
\bibliography{literature}
\end{document}